\def\@cite#1#2{[{{\bfseries #1}\if@tempswa , #2\fi}]}
\renewcommand{\section}{%
\@startsection{section}{1}{\z@}
{0.5truecm plus -1ex minus -.2ex}%
{1.0ex plus .2ex}{\bfseries\large}}
\def\@seccntformat#1{\csname the#1\endcsname.\ }
\numberwithin{equation}{section} 
\theoremstyle{theorem}
\newtheorem{thm}{Theorem}[section]
\newtheorem{lem}[thm]{Lemma}
\theoremstyle{definition}
\newtheorem{df}{Definition}[section]
\newtheorem{remark}{Remark}[section]
\newtheorem*{prth1.1}{Proof of Theorem 1.1}
\newtheorem*{prth1.2}{Proof of Theorem 1.2}
\newtheorem*{prth1.3}{Proof of Theorem 1.3}
\newtheorem*{prth1.4}{Proof of Theorem 1.4}
\let\hat\widehat
\def\Pi{\hat\pi}
\begin{document}
\footnote[0]
    {2010 {\it Mathematics Subject Classification}\/: 
    35A35, 35G30, 80A22.     
    }
\footnote[0] 
    {{\it Key words and phrases}\/: 
    time discretizations; nonlocal phase field systems; inertial terms; 
    existence; error estimates.    
} 
\begin{center}
    \Large{{\bf Time discretization of a nonlocal phase-field system \\ 
                                                                               with inertial term}}
\end{center}
\vspace{5pt}
\begin{center}
    Shunsuke Kurima%
    \\
    \vspace{2pt}
    Department of Mathematics, 
    Tokyo University of Science\\
    1-3, Kagurazaka, Shinjuku-ku, Tokyo 162-8601, Japan\\
    {\tt shunsuke.kurima@gmail.com}\\
    \vspace{2pt}
\end{center}
\begin{center}    
    \small \today
\end{center}

\vspace{2pt}
\newenvironment{summary}
{\vspace{.5\baselineskip}\begin{list}{}{%
     \setlength{\baselineskip}{0.85\baselineskip}
     \setlength{\topsep}{0pt}
     \setlength{\leftmargin}{12mm}
     \setlength{\rightmargin}{12mm}
     \setlength{\listparindent}{0mm}
     \setlength{\itemindent}{\listparindent}
     \setlength{\parsep}{0pt}
     \item\relax}}{\end{list}\vspace{.5\baselineskip}}
\begin{summary}
{\footnotesize {\bf Abstract.} 
Time discretizations of phase-field systems have been studied. 
For example, a time discretization and an error estimate for a parabolic-parabolic phase-field system have been studied by Colli--K. [Commun. Pure Appl. Anal. 18 (2019)].  
Also, a time discretization and an error estimate for a simultaneous abstract evolution equation applying parabolic-hyperbolic phase field systems 
and the linearized equations of coupled sound and heat flow 
have been studied 
(see K. [ESAIM Math. Model. Numer. Anal.54 (2020), 
                                    Electron. J. Differential Equations 2020, Paper No. 96]). 
On the other hand, although existence, continuous dependence estimates  
and behavior of solutions to 
nonlocal phase-field systems with inertial terms have been studied 
by Grasselli--Petzeltov\'a--Schimperna [Quart. Appl. Math. 65 (2007)], 
time discretizations of these systems seem to be not studied yet. 
In this paper we focus on employing a time discretization scheme 
for a nonlocal phase-field system with inertial term  
and establishing an error estimate 
for the difference between continuous and discrete solutions. 
}
\end{summary}
\vspace{10pt}

\newpage

\section{Introduction}\label{Sec1}

Time discretizations of phase-field systems have been studied. 
For example, for the classical phase-field model proposed by Caginalp 
(cf.\ \cite{Cag, EllZheng}; one may also see 
the monographs \cite{BrokSpr, fremond, V1996})  
\begin{equation*}\tag{E1}\label{E1}
     \begin{cases}
         \theta_{t} + \varphi_{t} - \Delta\theta = f   
         & \mbox{in}\ \Omega\times(0, T), 
 \\[0mm]
         \varphi_{t} - \Delta\varphi + \beta(\varphi) + \pi(\varphi) 
         = \theta 
         & \mbox{in}\ \Omega\times(0, T),  
     \end{cases}
 \end{equation*}
Colli--K.\ \cite{CK1} have studied 
a time discretization and an error estimate,  
where $\Omega$ is a domain in $\mathbb{R}^{d}$ ($d \in \mathbb{N}$), 
$T>0$, 
$\beta : \mathbb{R} \to \mathbb{R}$ is a maximal monotone function, 
$\pi : \mathbb{R} \to \mathbb{R}$ is an anti-monotone function, 
$f : \Omega\times(0, T) \to \mathbb{R}$ is a given function. 
Also, for a simultaneous abstract evolution equation applying 
the parabolic-hyperbolic phase-field system 
(see e.g., \cite{GP2003, GP2004, GPS2006, WGZ2007dynamicalBD, WGZ2007})   
\begin{equation*}\tag{E2}\label{E2}
     \begin{cases}
         \theta_{t} + \varphi_{t} - \Delta\theta = f      
         & \mbox{in}\ \Omega\times(0, T), 
 \\[0mm]
        \varphi_{tt} + \varphi_{t} - \Delta\varphi 
        + \beta(\varphi) + \pi(\varphi) = \theta 
         & \mbox{in}\ \Omega\times(0, T),                                
     \end{cases}
\end{equation*}
a time discretization scheme has been employed 
and an error estimate has been derived  
(see \cite{K4}). 
Moreover, for 
a simultaneous abstract evolution equation applying 
\eqref{E2} (in the case that $f=0$) 
and the linearized equations of coupled sound and heat flow 
(see e.g, Matsubara--Yokota \cite{MY2016})
\begin{equation*}\tag{E3}\label{E3}
     \begin{cases}
         \theta_{t} + (\gamma-1)\varphi_{t} - \sigma\Delta\theta = 0      
         & \mbox{in}\ \Omega\times(0, T), 
 \\[0mm]
        \varphi_{tt}  - c^2 \Delta\varphi 
        -m^2 \varphi = -c^2 \Delta\theta 
         & \mbox{in}\ \Omega\times(0, T),                             
     \end{cases}
\end{equation*}
a time discretization and an error estimate 
have been studied, 
where $c>0$, $\sigma>0$, $m \in \mathbb{R}$ and $\gamma>1$ are constants 
(see \cite{K5}).    
On the other hand, Grasselli--Petzeltov\'a--Schimperna \cite{GPS2007}  
have established existence, a continuous dependence estimate 
and behavior of solutions to 
the nonlocal phase-field system  
\begin{equation*}\tag{E4}\label{E4}
     \begin{cases}
         \theta_t + \varphi_t - \Delta \theta = f  
&\mbox{in}\ \Omega \times (0, T), 
\\[0mm] 
\varphi_{tt} + \varphi_t +a(\cdot)\varphi - J\ast\varphi 
                                           + \beta(\varphi) + \pi(\varphi) = \theta
&\mbox{in}\ \Omega \times (0, T),                           
     \end{cases}
\end{equation*}
where $\displaystyle a(x) := \int_{\Omega} J(x-y)\,dy$ for $x \in \Omega$, 
$\displaystyle (J\ast\varphi)(x) := \int_{\Omega} J(x-y)\varphi(y)\,dy$ 
for $x \in \Omega$, 
$J : \mathbb{R}^d \to \mathbb{R}$ is a given function. 
However, time discretizations of \eqref{E4} seem to be not studied yet.   
\medskip

In this paper, 
for the nonlocal phase-field system with inertial term
\begin{equation}\label{P}\tag{P}
\begin{cases}
\theta_t + \varphi_t - \Delta \theta = f  
&\mbox{in}\ \Omega \times (0, T), 
\\[1mm] 
\varphi_{tt} + \varphi_t +a(\cdot)\varphi - J\ast\varphi 
                                           + \beta(\varphi) + \pi(\varphi) = \theta
&\mbox{in}\ \Omega \times (0, T), 
\\[2mm]
\partial_\nu \theta = 0 
&\mbox{on}\ \partial\Omega \times (0, T), 
\\[1mm] 
\theta(0) = \theta_0,\ \varphi(0) = \varphi_0,\ \varphi_{t}(0) = v_0 &\mbox{in}\ \Omega,   
\end{cases}
\end{equation}
we employ the following time discretization scheme:  
find $(\theta_{n+1}, \varphi_{n+1})$  such that  
\begin{equation*}\tag*{(P)$_{n}$}\label{Pn}
     \begin{cases}
        \frac{\theta_{n+1}-\theta_{n}}{h} + \frac{\varphi_{n+1}-\varphi_{n}}{h} 
        - \Delta\theta_{n+1} = f_{n+1}  
         & \mbox{in}\ \Omega, 
 \\[1mm]
         z_{n+1} + v_{n+1} + a(\cdot)\varphi_{n} - J\ast\varphi_{n} 
         + \beta(\varphi_{n+1}) + \pi(\varphi_{n+1}) 
         = \theta_{n} 
         & \mbox{in}\ \Omega, 
 \\[1mm]
         z_{n+1} = \frac{v_{n+1}-v_{n}}{h},\ v_{n+1} = \frac{\varphi_{n+1}-\varphi_{n}}{h} 
         & \mbox{in}\ \Omega, 
 \\[1mm]
         \partial_{\nu}\theta_{n+1} = 0                                   
         & \mbox{on}\ \partial\Omega 
     \end{cases}
\end{equation*}
for $n=0, ... , N-1$, where $h=\frac{T}{N}$, $N \in \mathbb{N}$ 
and $\displaystyle f_{k} := \frac{1}{h}\int_{(k-1)h}^{kh} f(s)\,ds$  
for $k = 1, ... , N$. 
Here $\Omega \subset \mathbb{R}^d$ ($d= 1, 2, 3$) is a bounded domain 
with smooth boundary $\partial\Omega$, 
$\partial_\nu$ denotes differentiation with respect to
the outward normal of $\partial\Omega$, 
$\theta_{0} : \Omega \to \mathbb{R}$, 
$\varphi_{0} : \Omega \to \mathbb{R}$ and 
$v_{0} : \Omega \to \mathbb{R}$   
are given functions. 
Moreover, in this paper 
we assume that 
\begin{enumerate} 
\setlength{\itemsep}{0mm}
\item[(A1)] 
$J(-x) = J(x)$ for all $x \in \mathbb{R}^d$ 
and $\displaystyle\sup_{x \in \Omega} \int_{\Omega} |J(x-y)|\,dy < + \infty$. 
\item[(A2)] $\beta : \mathbb{R} \to \mathbb{R}$                                
is a single-valued maximal monotone function 
such that 
there exists a proper lower semicontinuous convex function 
$\hat{\beta} : \mathbb{R} \to [0, +\infty)$ 
satisfying that   
$\hat{\beta}(0) = 0$ and 
$\beta = \partial\hat{\beta}$, 
where $\partial\hat{\beta}$  
is the subdifferential of $\hat{\beta}$. 
Moreover, $\beta : \mathbb{R} \to \mathbb{R}$ is local Lipschitz continuous. 
\item[(A3)] $\pi : \mathbb{R} \to \mathbb{R}$ is a Lipschitz continuous function. 
\item[(A4)] $f \in L^2(\Omega\times(0, T))$, 
$\theta_0 \in H^1(\Omega) \cap L^{\infty}(\Omega)$, 
$\varphi_0, v_0 \in L^{\infty}(\Omega)$. 
\end{enumerate}
In the case that $\beta(r) = ar^3$, $\widehat{\beta}(r) = \frac{a}{4}r^4$, 
$\pi(r) = br + c$ for $r \in \mathbb{R}$, 
where $a>0$, $b, c \in \mathbb{R}$ are some constants, 
the conditions (A2) and (A3) hold.  

\begin{remark}\label{remark about betahatverphi0}
We see from (A2), (A4) and the definition of the subdifferential 
that 
\[
0 \leq \widehat{\beta}(\varphi_{0}) \leq \beta(\varphi_{0})\varphi_{0} 
\in L^{\infty}(\Omega). 
\]   
\end{remark}

\bigskip

%
%
%
Let us define the Hilbert spaces 
   \[
   H:=L^2(\Omega), \quad V:=H^1(\Omega)
   \]
 with inner products 
 \begin{align*} 
 &(u_{1}, u_{2})_{H}:=\int_{\Omega}u_{1}u_{2}\,dx \quad  (u_{1}, u_{2} \in H), \\
 &(v_{1}, v_{2})_{V}:=
 \int_{\Omega}\nabla v_{1}\cdot\nabla v_{2}\,dx + \int_{\Omega} v_{1}v_{2}\,dx \quad 
 (v_{1}, v_{2} \in V),
\end{align*}
 respectively,
 and with the related Hilbertian norms. 
 Moreover, we use the notation
   \[
   W:=\bigl\{z\in H^2(\Omega)\ |\ \partial_{\nu}z = 0 \quad 
   \mbox{a.e.\ on}\ \partial\Omega\bigr\}.
   \] 

\bigskip

We define solutions of \eqref{P} as follows.
%
%
%
 \begin{df}         
 A pair $(\theta, \varphi)$ with 
    \begin{align*}
    &\theta \in H^1(0, T; H) \cap L^{\infty}(0, T; V) \cap L^{2}(0, T; W),  
    \\
    &\varphi \in W^{2, \infty}(0, T; H) \cap W^{2, 2}(0, T; L^{\infty}(\Omega)) 
                                                            \cap W^{1, \infty}(0, T; L^{\infty}(\Omega))  
    \end{align*}
 is called a {\it solution} of \eqref{P} 
 if $(\theta, \varphi)$ satisfies 
    \begin{align*}
        &\theta_t + \varphi_t - \Delta \theta = f  
                           \quad \mbox{a.e.\ on}\ \Omega\times(0, T), 
     \\[2mm]
        &\varphi_{tt} + \varphi_t +a(\cdot)\varphi - J\ast\varphi 
                                                             + \beta(\varphi) + \pi(\varphi) = \theta 
                               \quad \mbox{a.e.\ on}\ \Omega\times(0, T), 
     \\[2mm]
        &\theta(0) = \theta_0,\ \varphi(0) = \varphi_0,\ \varphi_{t}(0) = v_0 
                                                       \quad \mbox{a.e.\ on}\ \Omega. 
     \end{align*}
 \end{df}

The first main result asserts existence and uniqueness of solutions to 
\ref{Pn} for $n = 0, ..., N-1$. 
\begin{thm}\label{maintheorem1}
Assume that {\rm (A1)-(A4)} hold. 
Then there exists $h_{0} \in (0, 1)$ such that  
for all $h \in (0, h_{0})$ 
there exists a unique solution $(\theta_{n+1}, \varphi_{n+1})$ 
of {\rm \ref{Pn}} 
satisfying 
\[
\theta_{n+1} \in W,\ \varphi_{n+1} \in L^{\infty}(\Omega) 
\quad\mbox{for}\ n=0, ..., N-1.
\] 
\end{thm}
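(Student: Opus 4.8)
The plan is to reduce \ref{Pn}, at each step $n$, to two essentially decoupled subproblems — a pointwise algebraic equation for $\varphi_{n+1}$ and a linear elliptic Neumann problem for $\theta_{n+1}$ — and then to run an induction on $n$. Substituting $v_{n+1}=(\varphi_{n+1}-\varphi_n)/h$ and $z_{n+1}=(v_{n+1}-v_n)/h$ into the second line of \ref{Pn} turns it into
\[
\lambda_h\,\varphi_{n+1}+\beta(\varphi_{n+1})+\pi(\varphi_{n+1})=g_n
\quad\text{a.e.\ in }\Omega, \qquad \lambda_h:=\frac{1+h}{h^{2}},
\]
with the known right-hand side $g_n:=\theta_n+h^{-1}v_n+\lambda_h\varphi_n-a(\cdot)\varphi_n+J\ast\varphi_n$, while the first line becomes
\[
(I-h\Delta)\theta_{n+1}=\theta_n-(\varphi_{n+1}-\varphi_n)+hf_{n+1}\ \text{ in }\Omega,
\qquad \partial_\nu\theta_{n+1}=0\ \text{ on }\partial\Omega.
\]
So $\varphi_{n+1}$ can be found first (pointwise), and then $\theta_{n+1}$ with $\varphi_{n+1}$ already known.

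First I would choose $h_0\in(0,1)$ so small that $\lambda_h>L_\pi$ for every $h\in(0,h_0)$, where $L_\pi$ is the Lipschitz constant of $\pi$ from (A3); this is legitimate since $\lambda_h\to+\infty$ as $h\downarrow0$. For such $h$ the scalar map $\gamma_h(r):=\lambda_h r+\beta(r)+\pi(r)$ is continuous — $\beta$ being single-valued maximal monotone, hence continuous, with $\beta(0)=0$ because $\widehat{\beta}\ge\widehat{\beta}(0)=0$ forces $0\in\partial\widehat{\beta}(0)$ — and monotonicity of $\beta$ together with (A3) gives $|\gamma_h(r_1)-\gamma_h(r_2)|\ge(\lambda_h-L_\pi)\,|r_1-r_2|$, so $\gamma_h$ is a strictly increasing continuous bijection of $\mathbb{R}$ whose inverse is $(\lambda_h-L_\pi)^{-1}$-Lipschitz. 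Assuming inductively that $\theta_n,\varphi_n,v_n\in L^\infty(\Omega)$, we get $g_n\in L^\infty(\Omega)$ — using (A1) both for $a(\cdot)\in L^\infty(\Omega)$ and for $\|J\ast\varphi_n\|_{L^\infty(\Omega)}\le\|\varphi_n\|_{L^\infty(\Omega)}\sup_{x\in\Omega}\int_\Omega|J(x-y)|\,dy$ — hence $\varphi_{n+1}:=\gamma_h^{-1}\circ g_n$ is the unique measurable solution of the algebraic equation (equivalently, $\varphi_{n+1}$ is the fixed point of the contraction $r\mapsto(\lambda_h I+\beta)^{-1}(g_n-\pi(r))$, whose constant $L_\pi/\lambda_h$ is $<1$), and since $\gamma_h(0)=\pi(0)$ one has $\|\varphi_{n+1}\|_{L^\infty(\Omega)}\le(\lambda_h-L_\pi)^{-1}\bigl(\|g_n\|_{L^\infty(\Omega)}+|\pi(0)|\bigr)<+\infty$; thus $\varphi_{n+1},\beta(\varphi_{n+1}),\pi(\varphi_{n+1})\in L^\infty(\Omega)$ by (A2)--(A3).

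With $\varphi_{n+1}$ in hand, the right-hand side $\theta_n-(\varphi_{n+1}-\varphi_n)+hf_{n+1}$ belongs to $H$ (note $f_{n+1}\in H$ by the Cauchy--Schwarz inequality and (A4)), so the Lax--Milgram theorem applied to the $V$-coercive, bounded bilinear form $(u,w)\mapsto(u,w)_H+h\int_\Omega\nabla u\cdot\nabla w\,dx$ yields a unique weak solution $\theta_{n+1}\in V$, which elliptic regularity on the smooth domain $\Omega$ upgrades to $\theta_{n+1}\in W$. Setting $v_{n+1}:=(\varphi_{n+1}-\varphi_n)/h$ and $z_{n+1}:=(v_{n+1}-v_n)/h$ then recovers a solution of \ref{Pn} with $\theta_{n+1}\in W$ and $\varphi_{n+1}\in L^\infty(\Omega)$, while uniqueness of the pair follows from the strict monotonicity of $\gamma_h$ and from Lax--Milgram. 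The induction starts at $n=0$ from $\theta_0,\varphi_0,v_0\in L^\infty(\Omega)$ (here $\theta_0\in L^\infty(\Omega)$ by (A4)), and it propagates because $W\hookrightarrow L^\infty(\Omega)$ for $d\le3$ and $v_{n+1}\in L^\infty(\Omega)$, so that $(\theta_{n+1},\varphi_{n+1},v_{n+1})$ again satisfies the inductive hypothesis.

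I expect the only genuine point to be the well-posedness of the nonlinear scalar equation $\gamma_h(\varphi_{n+1})=g_n$, which is precisely where the smallness condition $\lambda_h>L_\pi$ — i.e.\ the restriction $h\in(0,h_0)$ — is needed to dominate the (possibly anti-monotone) perturbation $\pi$ and restore strict monotonicity. Once $\beta$ has been absorbed into the bijection $\gamma_h$, the rest is the standard Lax--Milgram-plus-elliptic-regularity argument, together with routine bookkeeping to keep every iterate in $L^\infty(\Omega)$.
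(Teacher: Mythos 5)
Your proposal is correct, and its overall architecture coincides with the paper's: you decouple \ref{Pn} into a semilinear equation for $\varphi_{n+1}$ (your equation is exactly the paper's system (Q)$_{n}$ divided by $h^{2}$) followed by the linear Neumann problem $(I-h\Delta)\theta_{n+1}=hf_{n+1}+\varphi_{n}-\varphi_{n+1}+\theta_{n}$ solved by Lax--Milgram plus elliptic regularity, and you propagate $L^{\infty}$ data by induction using $W\hookrightarrow L^{\infty}(\Omega)$ for $d\le 3$. The genuine difference lies in how the key solvability lemma for the $\varphi$-equation is proved. The paper (Lemma \ref{elliptic1}) works in $H=L^{2}(\Omega)$: it writes the equation as $(I+\Phi+\Psi)\varphi=g$ with $\Phi z=h^{2}\beta(z)$ maximal monotone and $\Psi(z)=hz+h^{2}\pi(z)$ Lipschitz and monotone for $h<1/\|\pi'\|_{L^{\infty}(\mathbb{R})}$, invokes the perturbation result for maximal monotone operators to get a unique solution in $D(\Phi)\subset H$, and only afterwards recovers $\varphi\in L^{\infty}(\Omega)$ through a pointwise Young-inequality estimate. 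You instead solve the equation pointwise in $\mathbb{R}$: under $\lambda_h>L_\pi$ the scalar map $\gamma_h(r)=\lambda_h r+\beta(r)+\pi(r)$ is a continuous strictly increasing bijection with $(\lambda_h-L_\pi)^{-1}$-Lipschitz inverse, so $\varphi_{n+1}=\gamma_h^{-1}\circ g_n$ gives existence, uniqueness, measurability and the explicit bound $\|\varphi_{n+1}\|_{L^{\infty}(\Omega)}\le(\lambda_h-L_\pi)^{-1}\bigl(\|g_n\|_{L^{\infty}(\Omega)}+|\pi(0)|\bigr)$ in one stroke (your observation $\beta(0)=0$ from $\hat\beta\ge 0=\hat\beta(0)$ is the right justification). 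Your route is more elementary and self-contained --- it bypasses operator theory in $H$, yields Lipschitz dependence of $\varphi_{n+1}$ on $g_n$, and only uses continuity and monotonicity of $\beta$; the paper's Hilbert-space argument is less explicit but would survive generalizations where a pointwise inversion is unavailable (e.g.\ multivalued $\beta$ or nonlocal couplings inside the nonlinearity). Your smallness condition $\lambda_h=(1+h)/h^{2}>L_\pi$ plays the same role as the paper's $h<1/\|\pi'\|_{L^{\infty}(\mathbb{R})}$ (the latter implies the former), so the thresholds are compatible. Finally, you make explicit the inductive propagation that the paper compresses into the remark that the case $n=0$ suffices; that is a small gain in completeness, not a divergence in method.
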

\bigskip

Here, setting  
\begin{align}
&\hat{\theta}_{h}(t) := \theta_{n} + \frac{\theta_{n+1}-\theta_{n}}{h}(t-nh), 
\label{hat1} 
\\[2mm]  
&\hat{\varphi}_{h}(t) := \varphi_{n} + \frac{\varphi_{n+1}-\varphi_{n}}{h}(t-nh), 
\label{hat2} 
\\[2mm]  
&\hat{v}_{h}(t) := v_{n} + \frac{v_{n+1}-v_{n}}{h}(t-nh)  
\label{hat3} 
\end{align}
for $t \in [nh, (n+1)h]$, $n = 0, ..., N-1$, 
and 
\begin{align}
&\overline{\theta}_{h} (t) := \theta_{n+1},\ 
\underline{\theta}_{h} (t) := \theta_{n},\ 
\overline{\varphi}_{h} (t) := \varphi_{n+1},\ 
\underline{\varphi}_{h} (t) := \varphi_{n},\ \label{line1}   
\\[2mm]
&\overline{v}_{h} (t) := v_{n+1},\ 
\overline{z}_{h} (t) := z_{n+1},\ 
\overline{f}_{h}(t) := f_{n+1}  
\label{line2}   
\end{align}
for \ $t \in (nh, (n+1)h]$, $n=0, ..., N-1$, 
we can rewrite \ref{Pn} as  
\begin{equation*}\tag*{(P)$_{h}$}\label{Ph}
     \begin{cases}
        (\hat{\theta}_{h})_{t} + (\hat{\varphi}_{h})_{t} - \Delta\overline{\theta}_{h}  
        = \overline{f}_{h}  
         & \mbox{in}\ \Omega\times(0, T), 
 \\[2mm]
         \overline{z}_{h} + \overline{v}_{h} 
         + a(\cdot)\underline{\varphi}_{h} - J\ast\underline{\varphi}_{h} 
         + \beta(\overline{\varphi}_{h}) + \pi(\overline{\varphi}_{h}) 
         = \underline{\theta}_{h} 
         & \mbox{in}\ \Omega\times(0, T), 
\\[2mm]
         \overline{z}_{h} = (\hat{v}_{h})_{t},\ \overline{v}_{h} = (\hat{\varphi}_{h})_{t} 
         & \mbox{in}\ \Omega\times(0, T), 
 \\[3mm]
         \partial_{\nu}\overline{\theta}_{h} = 0                                   
         & \mbox{on}\ \partial\Omega\times(0, T),
 \\[2mm]
        \hat{\theta}_{h}(0) = \theta_{0},\ \hat{\varphi}_{h}(0) = \varphi_{0},\ 
        \hat{v}_{h}(0) = v_{0}                                      
         & \mbox{in}\ \Omega.  
     \end{cases}
 \end{equation*}
We can prove the following theorem by passing to the limit in \ref{Ph} 
as $h \searrow 0$ (see Section \ref{Sec4}). 
%
%
\begin{thm}\label{maintheorem2}
Assume that {\rm (A1)-(A4)} hold. 
Then there exists a unique solution $(\theta, \varphi)$ of {\rm \eqref{P}}. 
\end{thm}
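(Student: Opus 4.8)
The plan follows the usual scheme for a time discretization: first establish a priori estimates on the interpolants \eqref{hat1}--\eqref{line2} that are uniform in $h$, then pass to the limit in \ref{Ph} as $h\searrow0$ to produce a solution of \eqref{P}, and finally deduce uniqueness from a continuous dependence estimate (the latter also forces the convergence of the whole family, not merely of a subsequence).

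\emph{Step 1 (a priori estimates uniform in $h$).} I would start from the basic energy estimate, obtained by testing the first equation of \ref{Ph} by $\overline{\theta}_h$ and the second by $\overline{v}_h=(\hat{\varphi}_h)_t$, adding, and using repeatedly the identity $(a-b)a=\tfrac12(|a|^2-|b|^2+|a-b|^2)$ to telescope the difference quotients; the term $a(\cdot)\underline{\varphi}_h-J\ast\underline{\varphi}_h$ is controlled in $H$ by (A1) and Young's convolution inequality, the $\beta$ term via $\widehat{\beta}$ using $\beta(r)(r-s)\ge\widehat{\beta}(r)-\widehat{\beta}(s)$ and Remark \ref{remark about betahatverphi0}, the $\pi$ term by (A3), and the leftover coupling $(v_{n+1},\theta_{n+1}-\theta_n)$ by a careful Young splitting absorbed into the dissipative terms. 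A discrete Gronwall lemma then yields $\overline{\theta}_h$ bounded in $L^{\infty}(0,T;H)\cap L^2(0,T;V)$, $\overline{v}_h$ and $\overline{\varphi}_h$ bounded in $L^{\infty}(0,T;H)$, and $\widehat{\beta}(\overline{\varphi}_h)$ bounded in $L^{\infty}(0,T;L^1(\Omega))$. Testing next the first equation by $-\Delta\overline{\theta}_h$, telescoping the term in $\nabla\tfrac{\theta_{n+1}-\theta_n}{h}$ and using (A4) together with the bound on $(\hat{\varphi}_h)_t$ in $L^2(0,T;H)$, gives $\overline{\theta}_h$ bounded in $L^{\infty}(0,T;V)$ and $\Delta\overline{\theta}_h$ bounded in $L^2(0,T;H)$, hence $\overline{\theta}_h$ bounded in $L^2(0,T;W)$ by elliptic regularity and $\hat{\theta}_h$ bounded in $H^1(0,T;H)$ from the equation itself. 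Since $d\le3$, $W\hookrightarrow L^{\infty}(\Omega)$, so $\underline{\theta}_h$ is bounded in $L^2(0,T;L^{\infty}(\Omega))$; reading the second equation of \ref{Pn} pointwise in $x$ (after eliminating $z$ and $v$ it is a scalar semi-implicit discretization of a second-order ODE for $\varphi_{n+1}(x)$ with data $\varphi_0(x),v_0(x)$, whose forcing is bounded, for a.e.\ $x$, in $L^2(0,T)$ uniformly in $x$), a careful discrete estimate with a discrete Gronwall lemma gives $\overline{\varphi}_h$ and $\overline{v}_h$ bounded in $L^{\infty}(0,T;L^{\infty}(\Omega))$; then $\beta(\overline{\varphi}_h),\pi(\overline{\varphi}_h)$ are bounded in $L^{\infty}(0,T;L^{\infty}(\Omega))$ by (A2)--(A3), and reading $\overline{z}_h$ off the second equation shows $\overline{z}_h$ bounded in $L^2(0,T;L^{\infty}(\Omega))\cap L^{\infty}(0,T;H)$. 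These are precisely the bounds matching the regularity in the definition of a solution. I expect this bootstrap --- especially obtaining the $L^{\infty}(\Omega)$ control of $\varphi,\varphi_t,\varphi_{tt}$ through the intermediate regularity $\theta\in L^2(0,T;W)$ --- to be the main obstacle, since the $\varphi$-equation enjoys no spatial smoothing and the nonlocal term does not improve it.

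\emph{Step 2 (passage to the limit).} Using the estimates above together with $\|\overline{\theta}_h-\hat{\theta}_h\|_{L^2(0,T;H)}\le Ch$, $\|\overline{\varphi}_h-\hat{\varphi}_h\|_{L^{\infty}(0,T;H)}\le Ch$ and $\|\overline{v}_h-\hat{v}_h\|_{L^{\infty}(0,T;H)}\le Ch$, along a subsequence all the $\theta$-interpolants converge to a common $\theta$ (weakly in $H^1(0,T;H)$, weakly-$\ast$ in $L^{\infty}(0,T;V)$, weakly in $L^2(0,T;W)$) and all the $\varphi$- and $v$-interpolants to common limits $\varphi,\varphi_t,\varphi_{tt}$ in the appropriate weak-$\ast$ topologies; by Aubin--Lions ($W\hookrightarrow\hookrightarrow V\hookrightarrow\hookrightarrow H$ and $\theta_t\in L^2(0,T;H)$) this improves to $\hat{\theta}_h\to\theta$, hence $\overline{\theta}_h,\underline{\theta}_h\to\theta$, strongly in $L^2(0,T;H)$. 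Passing to the limit in the first equation of \ref{Ph} is then immediate, as it is linear, and gives $\theta_t+\varphi_t-\Delta\theta=f$ with $\partial_\nu\theta=0$. For the second equation I would argue that, thanks to the uniform $L^{\infty}(\Omega)$ bounds, $\beta$ may be replaced by a globally Lipschitz function on the relevant range, so that $\psi_{tt}+\psi_t+a(\cdot)\psi-J\ast\psi+\beta(\psi)+\pi(\psi)=\theta$, $\psi(0)=\varphi_0$, $\psi_t(0)=v_0$, is an ODE in $H$ with Lipschitz right-hand side and hence possesses a unique solution $\psi$ with the required regularity; since $\underline{\theta}_h\to\theta$ strongly in $L^2(0,T;H)$ and the $\varphi$-part of \ref{Pn} is a consistent, stable discretization of this ODE, a standard discrete Gronwall argument comparing the scheme with $\psi$ yields $\hat{\varphi}_h\to\psi$ strongly in $C([0,T];H)$, whence $\psi=\varphi$ and $\overline{\varphi}_h\to\varphi$ strongly. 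Consequently $\beta(\overline{\varphi}_h)\to\beta(\varphi)$ and $\pi(\overline{\varphi}_h)\to\pi(\varphi)$ in $L^2(\Omega\times(0,T))$, and passing to the limit in the second equation of \ref{Ph} shows that $(\theta,\varphi)$ solves \eqref{P}; the initial conditions survive because $\hat{\theta}_h(0)=\theta_0$, $\hat{\varphi}_h(0)=\varphi_0$, $\hat{v}_h(0)=v_0$ with convergence in $C([0,T];H)$.

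\emph{Step 3 (uniqueness).} Given two solutions $(\theta^{(1)},\varphi^{(1)})$ and $(\theta^{(2)},\varphi^{(2)})$, set $\theta:=\theta^{(1)}-\theta^{(2)}$ and $\varphi:=\varphi^{(1)}-\varphi^{(2)}$, subtract the equations, test the first by $\theta$ and the second by $\varphi_t$, and add. Since both solutions lie in $L^{\infty}(\Omega\times(0,T))$, $\beta$ is Lipschitz on the relevant bounded interval by (A2), so the differences of the $\beta$- and $\pi$-terms are bounded by $C\|\varphi\|_H$, the nonlocal term by (A1), and the coupling terms cancel; one is left with a differential inequality for $\|\theta\|_H^2+\|\varphi_t\|_H^2+\|\varphi\|_H^2$ with right-hand side bounded by a constant times itself (plus $\|\nabla\theta\|_H^2$, which sits on the left), so Gronwall's lemma and the vanishing of all differences at $t=0$ give $\theta\equiv\varphi\equiv0$.
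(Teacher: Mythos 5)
Your proposal is correct in outline, but it passes to the limit by a genuinely different route than the paper. Your Step 1 essentially reproduces the paper's uniform estimates (Lemmas \ref{timedisces1}--\ref{timedisces7}, including the $L^{\infty}$ bootstrap through $\overline{\theta}_h\in L^2(0,T;W)\hookrightarrow L^2(0,T;L^{\infty}(\Omega))$), and your Step 3 is the same continuous dependence argument the paper invokes. The difference is in Step 2: the paper uses neither compactness nor an auxiliary ODE; instead it proves the Cauchy criterion of Lemma~\ref{Cauchy's criterion}, estimating the difference of two discrete solutions with steps $h$ and $\tau$ by $C(h^{1/2}+\tau^{1/2})+C\|\overline{f}_h-\overline{f}_\tau\|_{L^2(0,T;H)}$, which yields strong convergence of $\widehat{\theta}_h$, $\widehat{\varphi}_h$, $\widehat{v}_h$ in $C([0,T];H)$ all at once, precisely because Aubin--Lions is unavailable for the $\varphi$-component; the very same computation then gives uniqueness and, combined with $\|\overline{f}_h-f\|_{L^2(0,T;H)}\le Ch^{1/2}$, the error estimate of Theorem~\ref{maintheorem3} for free. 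You instead get strong convergence of the temperature by Simon/Aubin--Lions compactness (legitimate, since $\widehat{\theta}_h$ is bounded in $H^1(0,T;H)\cap L^{\infty}(0,T;V)$), and you treat the $\varphi$-component by truncating $\beta$ to a globally Lipschitz function on the range fixed by the uniform $L^{\infty}$ bounds, solving the resulting Lipschitz second-order ODE in $H$ with forcing $\theta$, and identifying the limit through a consistency--stability (discrete Gronwall) comparison of \ref{Pn} with that ODE solution $\psi$. This is a sound alternative: on the discrete range $\beta$ and its truncation coincide, the weak-$*$ limit $\varphi$ obeys the same $L^{\infty}$ bound, and the forcing mismatch in the comparison is exactly $\underline{\theta}_h-\theta$, for which your strong $L^2(0,T;H)$ convergence suffices --- but be aware that this comparison is the heart of the matter and is compressed into one sentence, whereas the paper's Lemma~\ref{Cauchy's criterion} carries the analogous burden explicitly; your route also yields only qualitative convergence, not the $h^{1/2}$ rate. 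One phrase should be corrected rather than trusted: the ODE theory gives $\psi$ only $W^{2,2}(0,T;H)$-type regularity, not ``the required regularity''; the full regularity demanded by the definition of a solution is inherited from the discrete bounds once you have shown $\psi=\varphi$, which is how your argument in fact closes.
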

The following theorem is concerned with the error estimate 
between the solution of \eqref{P} and the solution of \ref{Ph}. 
\begin{thm}\label{maintheorem3} 
Let $h_{0}$ be as in Theorem \ref{maintheorem1}. 
Assume that {\rm (A1)-(A4)} hold. 
Assume further that 
$f \in W^{1,1}(0, T; H)$. 
Then there exist constants $M>0$ and $h_{00} \in (0, h_{0})$ 
depending on the data such that 
\begin{align*} 
&\|\widehat{v}_{h} - v\|_{C([0, T]; H)} 
+ \|\overline{v}_{h} - v\|_{L^{2}(0, T; H)} 
+ \|\widehat{\varphi}_{h} - \varphi\|_{C([0, T]; H)} 
+ \|\widehat{\theta}_{h} - \theta \|_{C([0, T]; H)}
\\ 
&+ \|\nabla(\overline{\theta}_{h} - \theta)\|_{L^{2}(0, T; H)} 
\leq M h^{1/2}    
\end{align*}
for all $h \in (0, h_{00})$, where $v = \varphi_{t}$.  
\end{thm}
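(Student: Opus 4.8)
\noindent\textbf{Proof plan for Theorem \ref{maintheorem3}.}
The plan is to subtract the equations of \eqref{P} from those of \ref{Ph}, test the resulting error equations by carefully chosen quantities, and close the estimate by Gronwall's inequality. Throughout I will rely on the a priori bounds, uniform in $h$, obtained while proving Theorems \ref{maintheorem1} and \ref{maintheorem2} --- in particular bounds for $\|(\widehat{\theta}_{h})_{t}\|_{L^{2}(0,T;H)}$, $\|\overline{z}_{h}\|_{L^{2}(0,T;H)}$, $\|\overline{v}_{h}\|_{L^{\infty}(0,T;L^{\infty}(\Omega))}$ and an $L^{\infty}(\Omega\times(0,T))$-bound for $\overline{\varphi}_{h}$, $\underline{\varphi}_{h}$ --- together with the regularity of $(\theta,\varphi)$ coming from the definition of solution (so that $\theta_{t}\in L^{2}(0,T;H)$ and $\varphi, v=\varphi_{t}\in L^{\infty}(\Omega\times(0,T))$). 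Set $\rho:=\widehat{\theta}_{h}-\theta$, $\psi:=\widehat{\varphi}_{h}-\varphi$, $w:=\widehat{v}_{h}-v$; note that $\rho(0)=\psi(0)=w(0)=0$ by the initial conditions in \ref{Ph}, that $\psi_{t}=(\widehat{\varphi}_{h})_{t}-v=\overline{v}_{h}-v$, and that the ``piecewise-constant minus piecewise-linear'' residuals obey the pointwise bounds $|\overline{\theta}_{h}-\widehat{\theta}_{h}|, |\underline{\theta}_{h}-\widehat{\theta}_{h}|\leq h|(\widehat{\theta}_{h})_{t}|$, $|\overline{\varphi}_{h}-\widehat{\varphi}_{h}|, |\underline{\varphi}_{h}-\widehat{\varphi}_{h}|\leq h|\overline{v}_{h}|$ and $|\overline{v}_{h}-\widehat{v}_{h}|\leq h|\overline{z}_{h}|$.

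\emph{Heat part.} Subtracting the first lines gives $\rho_{t}+(\overline{v}_{h}-v)-\Delta(\overline{\theta}_{h}-\theta)=\overline{f}_{h}-f$ with $\partial_{\nu}(\overline{\theta}_{h}-\theta)=0$. Testing by $\overline{\theta}_{h}-\theta$ and integrating over $(0,t)$: the elliptic term yields $\int_{0}^{t}\|\nabla(\overline{\theta}_{h}-\theta)\|_{H}^{2}$; in $\int_{0}^{t}(\rho_{t},\overline{\theta}_{h}-\theta)$ I split $\overline{\theta}_{h}-\theta=\rho+(\overline{\theta}_{h}-\widehat{\theta}_{h})$, the first piece giving $\tfrac12\|\rho(t)\|_{H}^{2}$ and the second being bounded in absolute value by $h\|\rho_{t}\|_{L^{2}(0,T;H)}\|(\widehat{\theta}_{h})_{t}\|_{L^{2}(0,T;H)}\leq Ch$; the terms $\int_{0}^{t}(\overline{v}_{h}-v,\overline{\theta}_{h}-\theta)$ and $\int_{0}^{t}(\overline{f}_{h}-f,\overline{\theta}_{h}-\theta)$ are handled by Young's inequality, using $\|\overline{\theta}_{h}-\theta\|_{L^{2}(0,t;H)}^{2}\leq 2\|\rho\|_{L^{2}(0,t;H)}^{2}+Ch^{2}$ and the key time-discretization estimate $\|\overline{f}_{h}-f\|_{L^{2}(0,T;H)}\leq h^{1/2}\|f_{t}\|_{L^{1}(0,T;H)}$ (this is where the hypothesis $f\in W^{1,1}(0,T;H)$ and the exponent $1/2$ genuinely enter). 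This yields, for every $\ep>0$,
\begin{align*}
\tfrac12\|\rho(t)\|_{H}^{2}+\int_{0}^{t}\|\nabla(\overline{\theta}_{h}-\theta)\|_{H}^{2}\,ds
\leq Ch+\ep\|\overline{v}_{h}-v\|_{L^{2}(0,t;H)}^{2}+C_{\ep}\int_{0}^{t}\|\rho(s)\|_{H}^{2}\,ds.
\end{align*}

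\emph{Hyperbolic part and closing.} Subtracting the second lines and using $\overline{z}_{h}=(\widehat{v}_{h})_{t}$, $\varphi_{tt}=v_{t}$ gives $w_{t}+(\overline{v}_{h}-v)+a(\cdot)(\underline{\varphi}_{h}-\varphi)-J\ast(\underline{\varphi}_{h}-\varphi)+\beta(\overline{\varphi}_{h})-\beta(\varphi)+\pi(\overline{\varphi}_{h})-\pi(\varphi)=\underline{\theta}_{h}-\theta$. Testing by $w$ and integrating: $\int_{0}^{t}(w_{t},w)=\tfrac12\|w(t)\|_{H}^{2}$; writing $w=(\overline{v}_{h}-v)+(\widehat{v}_{h}-\overline{v}_{h})$ and using Young together with $|\widehat{v}_{h}-\overline{v}_{h}|\leq h|\overline{z}_{h}|$ gives $\int_{0}^{t}(\overline{v}_{h}-v,w)\geq \tfrac12\|\overline{v}_{h}-v\|_{L^{2}(0,t;H)}^{2}-\tfrac{h^{2}}{2}\|\overline{z}_{h}\|_{L^{2}(0,T;H)}^{2}$; the nonlocal terms are controlled via $\|a\|_{L^{\infty}(\Omega)}<\infty$ and (A1) (Young's convolution inequality), reducing to $\|\underline{\varphi}_{h}-\varphi\|_{L^{2}(0,t;H)}^{2}\leq 2\|\psi\|_{L^{2}(0,t;H)}^{2}+Ch^{2}$; the $\pi$-term uses the global Lipschitz constant of (A3), and the $\beta$-term is estimated by the \emph{local} Lipschitz constant $L$ of $\beta$ on the ball of radius $R:=\max\{\|\varphi\|_{L^{\infty}(\Omega\times(0,T))},\ \sup_{h}\|\overline{\varphi}_{h}\|_{L^{\infty}(\Omega\times(0,T))}\}$, whence $\|\beta(\overline{\varphi}_{h})-\beta(\varphi)\|_{H}\leq L(\|\psi\|_{H}+h\|\overline{v}_{h}\|_{H})$; finally $\int_{0}^{t}(\underline{\theta}_{h}-\theta,w)$ is treated via $\underline{\theta}_{h}-\theta=\rho+(\underline{\theta}_{h}-\widehat{\theta}_{h})$ and the pointwise residual bound. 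Combining, and adding the elementary inequality $\tfrac12\|\psi(t)\|_{H}^{2}\leq\tfrac12\int_{0}^{t}\|\overline{v}_{h}-v\|_{H}^{2}+\tfrac12\int_{0}^{t}\|\psi\|_{H}^{2}$ (which follows from $\psi_{t}=\overline{v}_{h}-v$) to the heat estimate above with small weights in front of the hyperbolic and $\psi$-inequalities --- so that the $\|\overline{v}_{h}-v\|_{L^{2}(0,t;H)}^{2}$ terms on the right are absorbed by the $\tfrac12\|\overline{v}_{h}-v\|_{L^{2}(0,t;H)}^{2}$ on the left, possibly after shrinking $h_{0}$ to some $h_{00}$ --- one arrives at
\begin{align*}
&\|\rho(t)\|_{H}^{2}+\|w(t)\|_{H}^{2}+\|\psi(t)\|_{H}^{2}+\int_{0}^{t}\|\nabla(\overline{\theta}_{h}-\theta)\|_{H}^{2}\,ds+\int_{0}^{t}\|\overline{v}_{h}-v\|_{H}^{2}\,ds \\
&\qquad\leq Ch+C\int_{0}^{t}\bigl(\|\rho(s)\|_{H}^{2}+\|w(s)\|_{H}^{2}+\|\psi(s)\|_{H}^{2}\bigr)ds.
\end{align*}
Gronwall's inequality then bounds the left-hand side by $C'h$ uniformly in $t\in[0,T]$; taking square roots and suprema gives precisely the asserted estimate with $M:=(C')^{1/2}$.

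\emph{Main obstacle.} The two delicate points are: (i) securing the uniform-in-$h$ a priori bounds on the discrete solution --- above all the $L^{\infty}(\Omega)$ bound on $\varphi_{n+1}$, which is indispensable for treating the nonlinearity $\beta$ (Lipschitz only locally, by (A2)) through its local Lipschitz constant on a fixed ball; and (ii) the bookkeeping of the parabolic--hyperbolic coupling, namely arranging the weighted Young's inequalities so that every cross term ($\overline{v}_{h}-v$ tested against $\overline{\theta}_{h}-\theta$, $\rho$ against $w$, the residual interpolation differences) is absorbed and only the $h$-order discretization residuals and the Gronwall-type integrals survive.
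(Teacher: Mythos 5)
Your plan is correct in substance, but it takes a genuinely different route from the paper. The paper never compares the discrete interpolants with the exact solution directly: all the energy work is done in Lemma \ref{Cauchy's criterion}, which compares two discrete solutions with step sizes $h$ and $\tau$ and yields the bound $C(h^{1/2}+\tau^{1/2})+C\|\overline{f}_{h}-\overline{f}_{\tau}\|_{L^{2}(0,T;H)}$; Theorem \ref{maintheorem3} is then obtained in two lines by letting $\tau\searrow 0$ (the $\tau$-interpolants converge to the unique solution constructed in Theorem \ref{maintheorem2}) and invoking $\|\overline{f}_{h}-f\|_{L^{2}(0,T;H)}\leq C_{1}h^{1/2}$ for $f\in W^{1,1}(0,T;H)$, cited from Colli--Kurima. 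Your proposal instead subtracts \eqref{P} from \ref{Ph} and reruns the energy/Gronwall computation against $(\theta,\varphi)$ itself; this is essentially the computation of Lemma \ref{Cauchy's criterion} with the second discrete solution replaced by the continuous one, and every ingredient you invoke (the $L^{\infty}$ bounds on $\overline{\varphi}_{h}$, $\overline{v}_{h}$, the bounds on $\overline{z}_{h}$ and $(\widehat{\theta}_{h})_{t}$, the interpolation-residual identities \eqref{rem4}--\eqref{rem8}, and the $O(h^{1/2})$ bound for $\overline{f}_{h}-f$) is exactly what the paper establishes in Lemmas \ref{timedisces1}--\ref{timedisces7} and in the proof of Theorem \ref{maintheorem3}. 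What the paper's route buys is economy: the discrete--discrete Cauchy estimate is needed anyway to obtain strong convergence of $\widehat{\varphi}_{h}$ (no Aubin--Lions compactness is available for the nonlocal $\varphi$-equation), so the error estimate comes essentially for free and one never has to test against the continuous solution or use its regularity explicitly. What your route buys is a self-contained, classical error analysis; its extra requirement is the regularity of the limit solution ($\theta_{t}\in L^{2}(0,T;H)$, $\varphi,v\in L^{\infty}(\Omega\times(0,T))$, $\varphi_{tt}\in L^{\infty}(0,T;H)$), which Theorem \ref{maintheorem2} indeed supplies, and the bookkeeping you describe (absorbing the $\|\overline{v}_{h}-v\|_{L^{2}(0,t;H)}^{2}$ contributions from the heat and $\psi$ estimates into the half gained from the hyperbolic part, after possibly shrinking $h_{0}$ to $h_{00}$) is exactly the kind of weighted-Young juggling the paper performs inside Lemma \ref{Cauchy's criterion}, so no genuine gap remains.
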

\medskip

\begin{remark}
From \eqref{hat1}-\eqref{line2} we can obtain directly the following properties: 
\begin{align}
&\|\widehat{\theta}_{h}\|_{L^{\infty}(0, T; V)} 
= \max\{\|\theta_{0}\|_{V},\ \|\overline{\theta}_{h}\|_{L^{\infty}(0, T; V)}\}, 
\label{rem1} 
\\[3mm] 
&\|\widehat{\varphi}_{h}\|_{L^{\infty}(0, T; L^{\infty}(\Omega))} 
= \max\{\|\varphi_{0}\|_{L^{\infty}(\Omega)},\ 
                      \|\overline{\varphi}_{h}\|_{L^{\infty}(0, T; L^{\infty}(\Omega))}\}, 
\label{rem2} 
\\[3mm] 
&\|\widehat{v}_{h}\|_{L^{\infty}(0, T; L^{\infty}(\Omega))} 
= \max\{\|v_{0}\|_{L^{\infty}(\Omega)},\ 
                      \|\overline{v}_{h}\|_{L^{\infty}(0, T; L^{\infty}(\Omega))}\}, 
\label{rem3} 
\\[3mm] 
&\|\overline{\theta}_{h} - \widehat{\theta}_{h}\|_{L^2(0, T; H)}^2 
   = \dfrac{h^2}{3}\|(\widehat{\theta}_{h})_{t}\|_{L^2(0, T; H)}^2, 
\label{rem4}
\\[3mm]
&\|\overline{\varphi}_{h} - \widehat{\varphi}_{h}\|_{L^{\infty}(0, T; L^{\infty}(\Omega))} 
   = h\|(\widehat{\varphi}_{h})_{t}\|_{L^{\infty}(0, T; L^{\infty}(\Omega))} 
   = h\|\overline{v}_{h}\|_{L^{\infty}(0, T; L^{\infty}(\Omega))},  
\label{rem5}
\\[3mm]
&\|\overline{v}_{h} - \widehat{v}_{h}\|_{L^{\infty}(0, T; H)} 
   = h\|(\widehat{v}_{h})_{t}\|_{L^{\infty}(0, T; H)} 
   = h\|\overline{z}_{h}\|_{L^{\infty}(0, T; H)},  
\label{rem6}
\\[3mm]
&h(\widehat{\theta}_{h})_{t} 
  = \overline{\theta}_{h} - \underline{\theta}_{h}, \label{rem7} 
\\[3mm]
&h(\widehat{\varphi}_{h})_{t} 
  = \overline{\varphi}_{h} - \underline{\varphi}_{h}. \label{rem8} 
\end{align}
\end{remark}
\begin{remark}
Unlike in the case of local parabolic-hyperbolic phase-field systems, 
we cannot 
establish the $L^{p}(0, T; V)$-estimate ($1 \leq p \leq \infty$) 
for $\{\widehat{\varphi}_{h}\}_{h}$ 
and cannot apply the Aubin--Lions lemma 
(see e.g., \cite[Section 8, Corollary 4]{Si-1987}) 
for $\{\widehat{\varphi}_{h}\}_{h}$.  
Thus, since $\pi : \mathbb{R} \to \mathbb{R}$ 
is not monotone, 
to obtain the strong convergence of $\{\widehat{\varphi}_{h}\}_{h}$
in $L^{\infty}(0, T; H)$, 
which is necessary to verify that $\pi(\overline{\varphi}_{h}) \to \pi(\varphi)$ 
strongly in $L^{\infty}(0, T; H)$ as $h=h_{j} \searrow 0$ 
by the Lipschitz continuity of $\pi$ and the property \eqref{rem5}, 
we will try to confirm Cauchy's criterion for solutions of \ref{Ph} 
(see Lemma \ref{Cauchy's criterion}). 
\end{remark}

This paper is organized as follows. 
In Section \ref{Sec2} we prove existence and uniqueness of solutions to \ref{Pn} 
for $n=0, ..., N-1$. 
In Section \ref{Sec3} we derive a priori estimates  
and Cauchy's criterion for solutions of \ref{Ph}. 
Section \ref{Sec4} is devoted to the proofs of 
existence and uniqueness of solutions to \eqref{P} and an error estimate 
between the solution of \eqref{P} and the solution of \ref{Ph}.

\vspace{10pt}

\section{Existence and uniqueness for the discrete problem}\label{Sec2}
In this section we will show Theorem \ref{maintheorem1}.   
\begin{lem}\label{elliptic1}
There exists 
$h_{1} \in (0, \min\{1, 1/\|\pi'\|_{L^{\infty}(\mathbb{R})}\})$
such that 
for all $g \in L^{\infty}(\Omega)$ and all $h \in (0, h_{1})$
there exists a unique solution $\varphi \in L^{\infty}(\Omega)$ of the equation 
\[
\varphi + h\varphi + h^{2} \beta(\varphi) + h^{2} \pi(\varphi) = g 
\quad \mbox{a.e.\ on}\ \Omega.  
\]
\end{lem}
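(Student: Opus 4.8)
The plan is to solve the equation pointwise in $x \in \Omega$ and then check that the resulting function is measurable and bounded. For $h>0$ define $F_h : \mathbb{R} \to \mathbb{R}$ by $F_h(r) := (1+h)r + h^2\beta(r) + h^2\pi(r)$, so that the equation in the statement reads $F_h(\varphi(x)) = g(x)$ for a.e.\ $x \in \Omega$. The problem thus reduces to showing that, for $h$ small, $F_h$ is a homeomorphism of $\mathbb{R}$ onto $\mathbb{R}$ whose inverse behaves well.

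First I would record the elementary properties of $F_h$. Since $\beta$ is continuous (being single-valued maximal monotone and, by (A2), locally Lipschitz) and $\pi$ is Lipschitz, $F_h$ is continuous. Using the monotonicity of $\beta$ and the bound $|\pi(r)-\pi(s)| \leq \|\pi'\|_{L^{\infty}(\mathbb{R})}|r-s|$ one obtains, for all $r,s \in \mathbb{R}$,
\[
\bigl(F_h(r)-F_h(s)\bigr)(r-s) \geq \bigl((1+h) - h^2\|\pi'\|_{L^{\infty}(\mathbb{R})}\bigr)(r-s)^2 .
\]
Choosing $h_{1} \in \bigl(0, \min\{1, 1/\|\pi'\|_{L^{\infty}(\mathbb{R})}\}\bigr)$ guarantees $(1+h) - h^2\|\pi'\|_{L^{\infty}(\mathbb{R})} \geq 1$ for all $h \in (0,h_{1})$, whence $|F_h(r)-F_h(s)| \geq |r-s|$; in particular $F_h$ is strictly increasing. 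Since also $|F_h(r)| \geq |r| - |F_h(0)| \to +\infty$ as $|r| \to +\infty$, continuity and the intermediate value theorem show that $F_h$ is a bijection of $\mathbb{R}$ onto $\mathbb{R}$ and that $F_h^{-1}$ is Lipschitz continuous with constant at most $1$.

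Then I would set $\varphi := F_h^{-1}\circ g$. As $F_h^{-1}$ is continuous and $g$ is measurable, $\varphi$ is measurable. Since $0$ minimizes $\widehat{\beta}$ (because $\widehat{\beta}\geq 0$ and $\widehat{\beta}(0)=0$), we have $0 \in \partial\widehat{\beta}(0)$, i.e.\ $\beta(0)=0$, hence $F_h(0) = h^2\pi(0)$; therefore, for a.e.\ $x \in \Omega$,
\[
|\varphi(x)| = \bigl|F_h^{-1}(g(x)) - F_h^{-1}(h^2\pi(0))\bigr| \leq |g(x) - h^2\pi(0)| \leq \|g\|_{L^{\infty}(\Omega)} + |\pi(0)| ,
\]
so $\varphi \in L^{\infty}(\Omega)$ and $F_h(\varphi) = g$ a.e.\ on $\Omega$. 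Uniqueness is immediate: if $\varphi_1,\varphi_2 \in L^{\infty}(\Omega)$ both solve the equation, then $F_h(\varphi_1(x)) = F_h(\varphi_2(x))$ a.e., and injectivity of $F_h$ forces $\varphi_1 = \varphi_2$ a.e.

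I do not expect a serious obstacle here; the only points needing a little care are (i) fixing $h_{1}$ small enough that the non-monotone contribution $h^2\pi$ is absorbed by the strongly monotone part $(1+h)I$, and (ii) the measurability of the pointwise solution, which is handled by writing $\varphi$ as the composition of the continuous map $F_h^{-1}$ with the measurable function $g$. As an alternative to the pointwise bijection argument, one could instead apply the Banach fixed-point theorem in $L^{\infty}(\Omega)$ to the map $\varphi \mapsto \bigl((1+h)I + h^2\beta\bigr)^{-1}\!\bigl(g - h^2\pi(\varphi)\bigr)$, which is a contraction as soon as $h^2\|\pi'\|_{L^{\infty}(\mathbb{R})} < 1+h$; this yields the same conclusion.
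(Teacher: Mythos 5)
Your proof is correct, and it takes a genuinely different route from the paper's. You solve the equation pointwise: for $h\in(0,h_1)$ the scalar map $F_h(r)=(1+h)r+h^2\beta(r)+h^2\pi(r)$ satisfies $(F_h(r)-F_h(s))(r-s)\geq\bigl((1+h)-h^2\|\pi'\|_{L^{\infty}(\mathbb{R})}\bigr)(r-s)^2\geq(r-s)^2$ since $h\|\pi'\|_{L^{\infty}(\mathbb{R})}<1$, so $F_h$ is a continuous, strictly increasing bijection of $\mathbb{R}$ with $1$-Lipschitz inverse; then $\varphi:=F_h^{-1}\circ g$ is measurable, satisfies the equation a.e., obeys $\|\varphi\|_{L^{\infty}(\Omega)}\leq\|g\|_{L^{\infty}(\Omega)}+|\pi(0)|$ (using $\beta(0)=0$, which you correctly justify from $\widehat{\beta}\geq 0=\widehat{\beta}(0)$), and is unique by injectivity of $F_h$. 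The paper instead argues in $H=L^2(\Omega)$: it writes the equation as $(I+\Phi+\Psi)\varphi=g$ with $\Phi z=h^2\beta(z)$ maximal monotone and $\Psi z=hz+h^2\pi(z)$ Lipschitz and monotone for $h<1/\|\pi'\|_{L^{\infty}(\mathbb{R})}$, invokes the perturbation result in Showalter to conclude that $\Phi+\Psi$ is maximal monotone, hence gets a unique solution in $D(\Phi)\subset H$, and only afterwards recovers $\varphi\in L^{\infty}(\Omega)$ through a pointwise Young-inequality estimate. The smallness restriction on $h$ plays exactly the same role in both arguments (absorbing the anti-monotone perturbation $\pi$ into the strongly monotone part). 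What your route buys is self-containedness (no abstract operator-theoretic citation), an explicit $h$-independent bound $\|g\|_{L^{\infty}(\Omega)}+|\pi(0)|$, uniqueness in the wider class of measurable a.e.\ solutions, and simultaneous treatment of existence, boundedness and measurability; what the paper's route buys is alignment with the monotone-operator machinery used throughout the paper and an argument that does not require checking the scalar bijection by hand. Your closing remark about the Banach fixed-point alternative via the resolvent $\bigl((1+h)I+h^2\beta\bigr)^{-1}$ is also sound, since that resolvent is $\tfrac{1}{1+h}$-Lipschitz and the resulting map is a strict contraction on $L^{\infty}(\Omega)$ for $h^2\|\pi'\|_{L^{\infty}(\mathbb{R})}<1+h$.
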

\begin{proof}
We set the operator $\Phi: D(\Phi) \subset H \to H$ as 
\[
\Phi z := h^2 \beta(z) \quad \mbox{for}\ 
z \in D(\Phi) := \{ z \in H\ |\ \beta(z) \in H\}.  
\]
Then this operator is maximal monotone. 
Also, we define the operator $\Psi : H \to H$ as 
\[
\Psi (z) := hz + h^2 \pi(z) \quad \mbox{for}\ z \in H. 
\]
Then this operator is Lipschitz continuous and monotone for all 
$h \in (0, 1/\|\pi'\|_{L^{\infty}(\mathbb{R})})$. 
Thus the operator $\Phi + \Psi : D(\Phi) \subset H \to H$ is maximal monotone 
(see e.g., \cite[Lemma IV.2.1 (p.165)]{S-1997}) and then it follows that 
for all $g \in L^{\infty}(\Omega)$ and all $h \in (0, 1/\|\pi'\|_{L^{\infty}(\mathbb{R})})$ 
there exists a unique solution $\varphi \in D(\Phi)$ of the equation 
\begin{equation}\label{semifinalofproof}
\varphi + h\varphi + h^{2} \beta(\varphi) + h^{2} \pi(\varphi) = g 
\quad \mbox{in}\ H. 
\end{equation}
Here we test \eqref{semifinalofproof} by $\varphi(x)$ 
and use (A3), the Young inequality to infer that  
\begin{align*}
&|\varphi(x)|^2 + h|\varphi(x)|^2 + h^2 \beta(\varphi(x))\varphi(x) 
\\ \notag 
&= g(x)\varphi(x) - h^2 (\pi(\varphi(x))-\pi(0))\varphi(x) - h^2 \pi(0)\varphi(x) 
\\ \notag 
&\leq \frac{1}{2}\|g\|_{L^{\infty}(\Omega)}^2 
         + \frac{1}{2}|\varphi(x)|^2 
         + h^2 \|\pi'\|_{L^{\infty}(\mathbb{R})}|\varphi(x)|^2 
         + \frac{1}{2} h^2 |\varphi(x)|^2 
         + \frac{1}{2} h^2 |\pi(0)|^2 
\end{align*}
for a.a.\ $x \in \Omega$ and all $h \in (0, 1/\|\pi'\|_{L^{\infty}(\mathbb{R})})$.
Hence we see from the monotonicity of $\beta$ that 
there exist constants $C_{1}>0$ and 
$h_{1} \in (0, \min\{1, 1/\|\pi'\|_{L^{\infty}(\mathbb{R})}\})$
such that 
\begin{equation*}
|\varphi(x)|^2 \leq C_{1}
\end{equation*}
for a.a.\ $x \in \Omega$ and all $h \in (0, h_{1})$, which means that 
$\varphi \in L^{\infty}(\Omega)$.  
\end{proof}

\begin{prth1.1}
We can rewrite \ref{Pn} as   
\begin{equation*}\tag*{(Q)$_{n}$}\label{Qn}
     \begin{cases}
         \theta_{n+1} - h\Delta\theta_{n+1} 
         = hf_{n+1} + \varphi_{n} - \varphi_{n+1} + \theta_n, 
     \\[5mm] 
         \varphi_{n+1} + h\varphi_{n+1} + h^2 \beta(\varphi_{n+1}) + h^2 \pi(\varphi_{n+1}) 
         \\[2mm]
         = h^2 \theta_{n} + \varphi_{n} + hv_{n} + h\varphi_{n}  
            -h^2 a(\cdot)\varphi_{n} + h^2 J\ast\varphi_{n}.  
     \end{cases}
 \end{equation*}
It is enough for the proof of Theorem \ref{maintheorem1} 
to establish existence and uniqueness of solutions to \ref{Qn} 
in the case that $n=0$. 
Let $h_{1}$ be as in Lemma \ref{elliptic1}  
and let $h \in (0, h_{1})$. 
Then, owing to (A1), (A4) and Lemma \ref{elliptic1},     
there exists a unique solution $\varphi_{1} \in L^{\infty}(\Omega)$ of  the equation 
\[
\varphi_{1} + h\varphi_{1} + h^2 \beta(\varphi_{1}) + h^2 \pi(\varphi_{1}) 
         = h^2 \theta_{0} + \varphi_{0} + hv_{0} + h\varphi_{0}  
            -h^2 a(\cdot)\varphi_{0} + h^2 J\ast\varphi_{0}.  
\]
Also, for this function $\varphi_1$ there exists a unique solution $\theta_1\in W$ 
of the equation 
\[
\theta_1 - h\Delta\theta_1 
         = hf_1 + \varphi_{0} - \varphi_1 + \theta_0. 
\] 
Therefore 
there exists a unique solution $(\theta_{n+1}, \varphi_{n+1})$ 
of {\rm \ref{Qn}} 
satisfying 
$\theta_{n+1} \in W$ and $\varphi_{n+1} \in L^{\infty}(\Omega)$ 
in the case that $n=0$. 
\qed
\end{prth1.1}

\vspace{10pt}
 
\section{Uniform estimates and Cauchy's criterion}\label{Sec3}

In this section we will derive a priori estimates 
and Cauchy's criterion for solutions of \ref{Ph}.  
\begin{lem}\label{timedisces1}
Let $h_{1}$ be as in Lemma \ref{elliptic1}. 
Then there exist constants $C>0$ and  
$h_{2} \in (0, h_{1})$ 
depending on the data 
such that 
\begin{align*} 
\|\overline{v}_{h}\|_{L^{\infty}(0, T; H)}^2 
+ \|\overline{\varphi}_{h}\|_{L^{\infty}(0, T; H)}^2 
+ \|(\widehat{\theta}_{h})_{t}\|_{L^{2}(0, T; H)}^2 
+ \|\overline{\theta}_{h}\|_{L^{\infty}(0, T; V)}^2  
\leq C 
\end{align*}
for all $h \in (0, h_{2})$.  
\end{lem}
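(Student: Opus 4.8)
The plan is to carry out a discrete energy estimate paralleling the formal computation for \eqref{P} in which one tests the heat equation by $\theta$ and the momentum equation by $\varphi_t$ and adds; in the discrete setting one test will not close the estimate, and I will have to test the first line of \ref{Pn} by \emph{two} different multipliers, which is precisely why $\|(\widehat{\theta}_h)_t\|_{L^2(0,T;H)}^2$ appears in the bound. \textit{First} I would fix $m\in\{1,\dots,N\}$ and, for $n=0,\dots,m-1$, test the first line of \ref{Pn} in $H$ by $\theta_{n+1}$ and the second line by $v_{n+1}=(\varphi_{n+1}-\varphi_n)/h$, add the two, multiply by $h$ and sum in $n$. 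Using $(a-b,a)_H=\tfrac12(\|a\|_H^2-\|b\|_H^2+\|a-b\|_H^2)$ on $(\theta_{n+1}-\theta_n,\theta_{n+1})_H$ and $(v_{n+1}-v_n,v_{n+1})_H$, and the subdifferential inequality $\widehat\beta(\varphi_{n+1})-\widehat\beta(\varphi_n)\le\beta(\varphi_{n+1})(\varphi_{n+1}-\varphi_n)$ (legitimate since $\varphi_{n+1}\in L^\infty(\Omega)$ and $\beta$ is locally Lipschitz by (A2)), a telescoping argument should yield, for every $m$,
\[
\tfrac12\|\theta_m\|_H^2+\tfrac12\|v_m\|_H^2+\sum_{n=0}^{m-1}h\|\nabla\theta_{n+1}\|_H^2+\sum_{n=0}^{m-1}h\|v_{n+1}\|_H^2+\int_\Omega\widehat\beta(\varphi_m)\,dx\le R_1 ,
\]
where $R_1$ collects the data $\tfrac12\|\theta_0\|_H^2+\tfrac12\|v_0\|_H^2+\int_\Omega\widehat\beta(\varphi_0)\,dx$ (finite by Remark \ref{remark about betahatverphi0}), the forcing $\sum_n h(f_{n+1},\theta_{n+1})_H$, the nonlocal contribution $-\sum_n h(a(\cdot)\varphi_n-J\ast\varphi_n,v_{n+1})_H$, the reaction term $-\sum_n h(\pi(\varphi_{n+1}),v_{n+1})_H$, and the coupling remainder $-\sum_n h(v_{n+1},\theta_{n+1}-\theta_n)_H$, which cancels identically in the continuous problem but here does not.

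Next I would test the first line of \ref{Pn} by $(\theta_{n+1}-\theta_n)/h$, integrate by parts with $\partial_\nu\theta_{n+1}=0$, apply the same identity to $(\nabla\theta_{n+1}-\nabla\theta_n,\nabla\theta_{n+1})_H$, multiply by $\mu h$ with a small $\mu>0$ and sum; this puts $\mu\|(\widehat\theta_h)_t\|_{L^2(0,T_m;H)}^2+\tfrac\mu2\|\nabla\theta_m\|_H^2$ on the left (with $T_m=mh$) and, after Young on the cross term $\mu\sum_n h(v_{n+1},(\theta_{n+1}-\theta_n)/h)_H$ and on the forcing, leaves $\tfrac\mu2\|(\widehat\theta_h)_t\|_{L^2(0,T_m;H)}^2+\mu\|\overline v_h\|_{L^2(0,T_m;H)}^2+\mu\|f\|_{L^2(0,T;H)}^2+\tfrac\mu2\|\nabla\theta_0\|_H^2$ on the right. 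Adding this to the first estimate and using $\big|\sum_n h(v_{n+1},\theta_{n+1}-\theta_n)_H\big|=\big|\sum_n h^2(v_{n+1},(\widehat\theta_h)_t)_H\big|\le\tfrac h2\|\overline v_h\|_{L^2(0,T_m;H)}^2+\tfrac h2\|(\widehat\theta_h)_t\|_{L^2(0,T_m;H)}^2$, I would first pick $\mu$ small so that $\mu\|\overline v_h\|_{L^2}^2$ is absorbed by the left-hand $\|\overline v_h\|_{L^2}^2$, and then pick $h_2\in(0,h_1)$ small so that the $h$-weighted $\|(\widehat\theta_h)_t\|_{L^2}^2$ and $\|\overline v_h\|_{L^2}^2$ contributions are absorbed by $\mu\|(\widehat\theta_h)_t\|_{L^2}^2$ and by the left side.

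For the remaining terms I would use $\sum_n h\|f_{n+1}\|_H^2\le\|f\|_{L^2(0,T;H)}^2$ (Jensen, via the definition of $f_{n+1}$), hence $\sum_n h(f_{n+1},\theta_{n+1})_H\le\tfrac12\|f\|_{L^2(0,T;H)}^2+\tfrac12\sum_n h\|\theta_{n+1}\|_H^2$; (A1) gives $a\in L^\infty(\Omega)$ and $\|J\ast\varphi_n\|_H\le C\|\varphi_n\|_H$, so $\big|\sum_n h(a(\cdot)\varphi_n-J\ast\varphi_n,v_{n+1})_H\big|\le\tfrac14\sum_n h\|v_{n+1}\|_H^2+C\sum_n h\|\varphi_n\|_H^2$; (A3) gives $\|\pi(\varphi_{n+1})\|_H^2\le C(1+\|\varphi_{n+1}\|_H^2)$, so $\big|\sum_n h(\pi(\varphi_{n+1}),v_{n+1})_H\big|\le\tfrac14\sum_n h\|v_{n+1}\|_H^2+CT+C\sum_n h\|\varphi_{n+1}\|_H^2$, and the two $\tfrac14\sum h\|v_{n+1}\|_H^2$ go into the left side. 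Finally, from $\varphi_m=\varphi_0+h\sum_{k=1}^m v_k$ one gets $\|\varphi_m\|_H^2\le C\big(1+\sum_{n=0}^{m-1}h\|v_{n+1}\|_H^2\big)$, which converts the $\varphi$-sums into admissible double sums; splitting off the top index of $\sum h\|\theta_{n+1}\|_H^2$ and $\sum h\|\varphi_{n+1}\|_H^2$ and absorbing it for $h$ small, I arrive at $X_m\le C+C\sum_{n=0}^{m-1}hX_n$ with $X_m:=\tfrac12\|\theta_m\|_H^2+\tfrac12\|v_m\|_H^2+\sum_{n=0}^{m-1}h\|v_{n+1}\|_H^2+\mu\|(\widehat\theta_h)_t\|_{L^2(0,T_m;H)}^2+\tfrac\mu2\|\nabla\theta_m\|_H^2$. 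The discrete Gronwall inequality then gives $\sup_m X_m\le C$, and the claim follows by reading off $\|\overline v_h\|_{L^\infty(0,T;H)}^2=\max_m\|v_m\|_H^2$, $\|\overline\varphi_h\|_{L^\infty(0,T;H)}^2=\max_m\|\varphi_m\|_H^2\le C(1+\sup_m X_m)$, $\|(\widehat\theta_h)_t\|_{L^2(0,T;H)}^2\le\mu^{-1}\sup_m X_m$, and $\|\overline\theta_h\|_{L^\infty(0,T;V)}^2=\max_m(\|\theta_m\|_H^2+\|\nabla\theta_m\|_H^2)\le C\sup_m X_m$.

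I expect the accounting of which constants must be taken small to be routine; the genuine obstacle is the coupling remainder $\sum_n h(v_{n+1},\theta_{n+1}-\theta_n)_H$, which has no continuous counterpart and which the plain continuous-type energy estimate cannot control. The device of testing the heat equation a second time by its discrete time derivative — which is exactly what forces $\|(\widehat\theta_h)_t\|_{L^2}$ into the estimate — and then tuning $\mu$ and the threshold $h_2$ to reabsorb the cross terms is the one delicate point. A secondary difference from the local parabolic--hyperbolic case is that no $H^1$-bound on $\varphi$ is available, so $a(\cdot)\varphi-J\ast\varphi$ must be handled purely in $L^2$ via (A1); this is what keeps the nonlocal structure harmless here.
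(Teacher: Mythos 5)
Your proposal is correct and would prove the lemma, but it organizes the energy estimate differently from the paper. The paper tests the discrete heat equation only once, by the increment $\theta_{n+1}-\theta_{n}$: this single multiplier simultaneously produces the dissipation $h\bigl\|\tfrac{\theta_{n+1}-\theta_{n}}{h}\bigr\|_{H}^{2}$ and, after adding the innocuous term $h\bigl(\theta_{n+1},\tfrac{\theta_{n+1}-\theta_{n}}{h}\bigr)_{H}$ to complete the $V$-norm, the telescoping $\tfrac12\|\theta_{n+1}\|_{V}^{2}-\tfrac12\|\theta_{n}\|_{V}^{2}$ (see \eqref{a1}); the coupling term $-h\bigl(v_{n+1},\tfrac{\theta_{n+1}-\theta_{n}}{h}\bigr)_{H}$ is then absorbed directly by Young against the dissipation it itself generates. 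Consequently the ``coupling remainder'' $\sum_{n}h(v_{n+1},\theta_{n+1}-\theta_{n})_{H}$, which you identify as the genuine obstacle and handle by a second, $\mu$-weighted test of the heat equation by its discrete time derivative, never arises in the paper: there is no expectation of an exact cancellation because the classical pairing by $\theta_{n+1}$ is never used, and no small parameter $\mu$ is needed. Your two-multiplier device is perfectly sound (the remainder is $O(h)$ and absorbable, as you show), it is just one multiplier and one absorption step more than necessary. The other differences are minor: the paper controls $\|\varphi_{m}\|_{H}$ by the extra telescoping identity $(\varphi_{n+1},\varphi_{n+1}-\varphi_{n})_{H}=h(\varphi_{n+1},v_{n+1})_{H}$ (see \eqref{a4}), whereas you use $\varphi_{m}=\varphi_{0}+h\sum_{k\le m}v_{k}$ and Cauchy--Schwarz, which is equivalent; the treatment of the $\beta$ term via the subdifferential inequality, of the nonlocal and $\pi$ terms via (A1), (A3) and Young, the splitting-off of the top index before absorption for small $h$, and the final discrete Gronwall step all coincide with the paper's argument and yield the same constants-up-to-bookkeeping conclusion.
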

\begin{proof}
We test the first equation in \ref{Pn} by $\theta_{n+1} - \theta_{n}$, 
integrate over $\Omega$  
and use the identities $a(a-b) = \frac{1}{2}a^2 - \frac{1}{2}b^2 + \frac{1}{2}(a-b)^2$ 
($a, b \in \mathbb{R}$) and $v_{n+1} = \frac{\varphi_{n+1}-\varphi_{n}}{h}$, 
the Young inequality  
to infer that 
\begin{align}\label{a1}
&h\Bigl\|\frac{\theta_{n+1} - \theta_{n}}{h}\Bigr\|_{H}^2 
+ \frac{1}{2}\|\theta_{n+1}\|_{V}^2 - \frac{1}{2}\|\theta_{n}\|_{V}^2 
+ \frac{1}{2}\|\theta_{n+1} - \theta_{n}\|_{V}^2 
\\ \notag 
&= h\Bigl(f_{n+1}, \frac{\theta_{n+1} - \theta_{n}}{h} \Bigr)_{H} 
     -h\Bigl(v_{n+1}, \frac{\theta_{n+1} - \theta_{n}}{h} \Bigr)_{H} 
     + h\Bigl(\theta_{n+1}, \frac{\theta_{n+1} - \theta_{n}}{h} \Bigr)_{H} 
\\ \notag 
&\leq \frac{3}{2}h\|f_{n+1}\|_{H}^2 + \frac{3}{2}h\|v_{n+1}\|_{H}^2 
         + \frac{3}{2}h\|\theta_{n+1}\|_{V}^2 
         + \frac{1}{2}h\Bigl\|\frac{\theta_{n+1} - \theta_{n}}{h}\Bigr\|_{H}^2. 
\end{align}
Multiplying the second equation in \ref{Pn} by $hv_{n+1}$, 
integrating over $\Omega$ 
and 
applying the identity $a(a-b) = \frac{1}{2}a^2 - \frac{1}{2}b^2 + \frac{1}{2}(a-b)^2$ 
($a, b \in \mathbb{R}$), 
we see from (A1), (A3) and the Young inequality that 
there exists a constant $C_{1}>0$ such that 
\begin{align}\label{a2}
&\frac{1}{2}\|v_{n+1}\|_{H}^2 - \frac{1}{2}\|v_{n}\|_{H}^2 
+ \frac{1}{2}\|v_{n+1} - v_{n}\|_{H}^2 
+ h\|v_{n+1}\|_{H}^2 
+ (\beta(\varphi_{n+1}), \varphi_{n+1} - \varphi_{n})_{H} 
\\ \notag 
&= h(\theta_{n}, v_{n+1})_{H}  
    - h(a(\cdot)\varphi_{n} - J\ast\varphi_{n}, v_{n+1})_{H} 
   - h(\pi(\varphi_{n+1}) - \pi(0), v_{n+1})_{H} 
\\ \notag 
&\,\quad - h(\pi(0), v_{n+1})_{H} 
\\ \notag 
&\leq \frac{1}{2}h\|\theta_{n}\|_{H}^2 + C_{1}h\|\varphi_{n}\|_{H}^2 
         + \frac{\|\pi'\|_{L^{\infty}(\mathbb{R})}^2}{2}h\|\varphi_{n+1}\|_{H}^2 
         + 2h\|v_{n+1}\|_{H}^2 + \frac{\|\pi(0)\|_{H}^2}{2}h
\end{align}
for all $h \in (0, h_{1})$. 
Here it follows from (A2) and the definition of the subdifferential that
\begin{align}\label{a3}
(\beta(\varphi_{n+1}), \varphi_{n+1} - \varphi_{n})_{H} 
\geq 
\int_{\Omega} \widehat{\beta}(\varphi_{n+1}) - \int_{\Omega}\widehat{\beta}(\varphi_{n})  
\end{align}
and we have from 
the identities $a(a-b) = \frac{1}{2}a^2 - \frac{1}{2}b^2 + \frac{1}{2}(a-b)^2$ 
($a, b \in \mathbb{R}$) and $h v_{n+1} = \varphi_{n+1} - \varphi_{n}$, 
the Young inequality   
that 
\begin{align}\label{a4}
&\frac{1}{2}\|\varphi_{n+1}\|_{H}^2 - \frac{1}{2}\|\varphi_{n}\|_{H}^2 
+ \frac{1}{2}\|\varphi_{n+1} - \varphi_{n}\|_{H}^2 
\\ \notag 
&=  
(\varphi_{n+1}, \varphi_{n+1} - \varphi_{n})_{H} 
= h(\varphi_{n+1}, v_{n+1})_{H}
\leq \frac{1}{2}h\|\varphi_{n+1}\|_{H}^2 + \frac{1}{2}h\|v_{n+1}\|_{H}^2.  
\end{align}
Hence, owing to \eqref{a1}-\eqref{a4} 
and summing over $n=0, ..., m-1$ with $1 \leq m \leq N$, 
it holds that  
\begin{align*}
&\frac{1}{2}h\sum_{n=0}^{m-1}\Bigl\|\frac{\theta_{n+1} - \theta_{n}}{h}\Bigr\|_{H}^2 
+ \frac{1}{2}\|\theta_{m}\|_{V}^2 
+ \frac{1}{2}h^2 \sum_{n=0}^{m-1}\Bigl\|\frac{\theta_{n+1} - \theta_{n}}{h}\Bigr\|_{V}^2 
+ \frac{1}{2}\|v_{m}\|_{H}^2 
\\ \notag 
&+ \frac{1}{2}h^2 \sum_{n=0}^{m-1}\|z_{n+1}\|_{H}^2 
+ h \sum_{n=0}^{m-1}\|v_{n+1}\|_{H}^2 
+ \int_{\Omega}\widehat{\beta}(\varphi_{m}) 
+ \frac{1}{2}\|\varphi_{m}\|_{H}^2 
  + \frac{1}{2}h^2 \sum_{n=0}^{m-1}\|v_{n+1}\|_{H}^2 
\\ \notag 
&\leq \frac{1}{2}\|\theta_{0}\|_{V}^2 + \frac{1}{2}\|v_{0}\|_{H}^2 
        + \int_{\Omega}\widehat{\beta}(\varphi_{0}) 
        + \frac{1}{2}\|\varphi_{0}\|_{H}^2  
        + \frac{3}{2}h\sum_{n=0}^{m-1}\|f_{n+1}\|_{H}^2 
\\ \notag 
    &\,\quad+ 4h\sum_{n=0}^{m-1}\|v_{n+1}\|_{H}^2 
         + \frac{3}{2}h\sum_{n=0}^{m-1}\|\theta_{n+1}\|_{V}^2 
+ \frac{1}{2}h\sum_{n=0}^{m-1}\|\theta_{n}\|_{H}^2 
\\ \notag 
    &\,\quad + C_{1}h\sum_{n=0}^{m-1}\|\varphi_{n}\|_{H}^2 
  + \frac{\|\pi'\|_{L^{\infty}(\mathbb{R})}^2+1}{2}h\sum_{n=0}^{m-1}\|\varphi_{n+1}\|_{H}^2 
         + \frac{\|\pi(0)\|_{H}^2}{2}T 
\end{align*}
for all $h \in (0, h_{1})$ and $m = 1, ..., N$. 
Then the inequality 
\begin{align*}
&\frac{1}{2}h\sum_{n=0}^{m-1}\Bigl\|\frac{\theta_{n+1} - \theta_{n}}{h}\Bigr\|_{H}^2 
+ \frac{1-3h}{2}\|\theta_{m}\|_{V}^2 
+ \frac{1}{2}h^2 \sum_{n=0}^{m-1}\Bigl\|\frac{\theta_{n+1} - \theta_{n}}{h}\Bigr\|_{V}^2 
\\ \notag 
&+ \frac{1-8h}{2}\|v_{m}\|_{H}^2 
+ \frac{1}{2}h^2 \sum_{n=0}^{m-1}\|z_{n+1}\|_{H}^2 
+ h \sum_{n=0}^{m-1}\|v_{n+1}\|_{H}^2 
+ \int_{\Omega}\widehat{\beta}(\varphi_{m}) 
\\ \notag 
&+ \frac{1 - (\|\pi'\|_{L^{\infty}(\mathbb{R})}^2+1)h}{2}\|\varphi_{m}\|_{H}^2 
  + \frac{1}{2}h^2 \sum_{n=0}^{m-1}\|v_{n+1}\|_{H}^2 
\\ \notag 
&\leq \frac{1}{2}\|\theta_{0}\|_{V}^2 + \frac{1}{2}\|v_{0}\|_{H}^2 
        + \int_{\Omega}\widehat{\beta}(\varphi_{0}) 
        + \frac{1}{2}\|\varphi_{0}\|_{H}^2  
        + \frac{3}{2}h\sum_{n=0}^{m-1}\|f_{n+1}\|_{H}^2 
\\ \notag 
    &\,\quad+ 4h\sum_{j=0}^{m-1}\|v_{j}\|_{H}^2 
                                        + 2h\sum_{j=0}^{m-1}\|\theta_{j}\|_{V}^2 
         + \frac{2C_{1} + \|\pi'\|_{L^{\infty}(\mathbb{R})}^2+1}{2}h
                                                           \sum_{j=0}^{m-1}\|\varphi_{j}\|_{H}^2 
         + \frac{\|\pi(0)\|_{H}^2}{2}T 
\end{align*}
holds for all $h \in (0, h_{1})$ and $m = 1, ..., N$. 
Thus there exist constants $C_{2}>0$ and $h_{2} \in (0, h_{1})$ such that  
\begin{align*}
&h\sum_{n=0}^{m-1}\Bigl\|\frac{\theta_{n+1} - \theta_{n}}{h}\Bigr\|_{H}^2 
+ \|\theta_{m}\|_{V}^2 
+ h^2 \sum_{n=0}^{m-1}\Bigl\|\frac{\theta_{n+1} - \theta_{n}}{h}\Bigr\|_{V}^2 
\\ \notag 
&+ \|v_{m}\|_{H}^2 
+ h^2 \sum_{n=0}^{m-1}\|z_{n+1}\|_{H}^2 
+ h \sum_{n=0}^{m-1}\|v_{n+1}\|_{H}^2 
+ \int_{\Omega}\widehat{\beta}(\varphi_{m}) 
\\ \notag 
&+ \|\varphi_{m}\|_{H}^2 
  + h^2 \sum_{n=0}^{m-1}\|v_{n+1}\|_{H}^2 
\\ \notag 
&\leq C_{2}
         + C_{2}h\sum_{j=0}^{m-1}\|v_{j}\|_{H}^2 
         + C_{2}h\sum_{j=0}^{m-1}\|\theta_{j}\|_{V}^2 
         + C_{2}h\sum_{j=0}^{m-1}\|\varphi_{j}\|_{H}^2    
\end{align*}
for all $h \in (0, h_{2})$ and $m = 1, ..., N$. 
Therefore, thanks to the discrete Gronwall lemma (see e.g., \cite[Prop.\ 2.2.1]{Jerome}), 
we can obtain that there exists a constant $C_{3}>0$ such that 
\begin{align*}
&h\sum_{n=0}^{m-1}\Bigl\|\frac{\theta_{n+1} - \theta_{n}}{h}\Bigr\|_{H}^2 
+ \|\theta_{m}\|_{V}^2 
+ h^2 \sum_{n=0}^{m-1}\Bigl\|\frac{\theta_{n+1} - \theta_{n}}{h}\Bigr\|_{V}^2 
\\ \notag 
&+ \|v_{m}\|_{H}^2 
+ h^2 \sum_{n=0}^{m-1}\|z_{n+1}\|_{H}^2 
+ h \sum_{n=0}^{m-1}\|v_{n+1}\|_{H}^2 
+ \int_{\Omega}\widehat{\beta}(\varphi_{m}) 
\\ \notag 
&+ \|\varphi_{m}\|_{H}^2 
  + h^2 \sum_{n=0}^{m-1}\|v_{n+1}\|_{H}^2 
\leq C_{3}
\end{align*}
for all $h \in (0, h_{2})$ and $m = 1, ..., N$. 
\end{proof}
\begin{lem}\label{timedisces2}
Let $h_{2}$ be as in Lemma \ref{timedisces1}. 
Then there exists a constant $C>0$ depending on the data such that 
\begin{align*} 
\|\overline{\theta}_{h}\|_{L^{2}(0, T; W)}  \leq C
\end{align*}
for all $h \in (0, h_{2})$.  
\end{lem}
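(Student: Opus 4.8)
The plan is to derive the $L^2(0,T;W)$-bound by exploiting elliptic regularity for the first equation of \ref{Pn}, combined with the uniform estimates already established in Lemma \ref{timedisces1}. Rewriting the first equation of \ref{Pn} in the form $-\Delta\theta_{n+1} + \theta_{n+1} = f_{n+1} - \frac{\theta_{n+1}-\theta_n}{h} - v_{n+1} + \theta_{n+1}$, I observe that the right-hand side lies in $H$, so by standard elliptic regularity (here the Neumann problem on a bounded domain with smooth boundary) $\theta_{n+1} \in W$ with
\[
\|\theta_{n+1}\|_{W} \leq C_{\Omega}\Bigl( \|f_{n+1}\|_{H} + \Bigl\|\tfrac{\theta_{n+1}-\theta_n}{h}\Bigr\|_{H} + \|v_{n+1}\|_{H} + \|\theta_{n+1}\|_{H} \Bigr).
\]

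First I would square this inequality, multiply by $h$, and sum over $n=0,\dots,N-1$. The term $h\sum \|f_{n+1}\|_H^2$ is bounded by $\|f\|_{L^2(0,T;H)}^2$ (or $\|f\|_{L^2(\Omega\times(0,T))}^2$) via Jensen's inequality applied to the definition of $f_{n+1}$; the terms $h\sum\|\frac{\theta_{n+1}-\theta_n}{h}\|_H^2$, $h\sum\|v_{n+1}\|_H^2$ and $h\sum\|\theta_{n+1}\|_H^2 \le h\sum\|\theta_{n+1}\|_V^2$ are all controlled by the constant $C$ from Lemma \ref{timedisces1} (the last one using also $\|\theta_m\|_V^2 \le C$ for each $m$, so that $h\sum_{n=0}^{N-1}\|\theta_{n+1}\|_V^2 \le TC$). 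Converting the resulting discrete sum back to the continuous norm via the definition \eqref{line1} of $\overline{\theta}_h$, namely $\|\overline{\theta}_h\|_{L^2(0,T;W)}^2 = h\sum_{n=0}^{N-1}\|\theta_{n+1}\|_W^2$, yields the claimed bound.

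The only genuine point requiring care is the elliptic regularity estimate itself: I would cite the standard $H^2$-regularity result for the Neumann Laplacian on a smooth bounded domain (e.g.\ Brezis, or Grisvard), noting that $\|z\|_{H^2(\Omega)} \le C_\Omega(\|-\Delta z + z\|_H + \|z\|_H)$ for $z \in W$, which also handles the equivalence of norms on $W$. Everything else is bookkeeping: expanding $(a+b+c+d)^2 \le 4(a^2+b^2+c^2+d^2)$, invoking Lemma \ref{timedisces1}, and using Jensen on the time-averaged forcing. I do not anticipate any obstacle beyond assembling these ingredients; in particular no new Gronwall argument is needed here, since the bound is a direct consequence of the already-proven a priori estimates plus linear elliptic theory.
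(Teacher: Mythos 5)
Your argument is correct and follows essentially the same route as the paper: isolate $-\Delta\theta_{n+1}$ (equivalently $-\Delta\overline{\theta}_{h}$) from the first equation, bound its $L^{2}(0,T;H)$-norm by the quantities already controlled in Lemma \ref{timedisces1} (using Jensen for $\overline{f}_{h}$), and conclude via elliptic regularity for the Neumann problem. The only difference is that you carry out the bookkeeping at the discrete level $n$ by $n$ before summing, whereas the paper states it directly for the piecewise-constant interpolant; this is the same computation.
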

\begin{proof}
We have from the first equation in \ref{Ph} and Lemma \ref{timedisces1} that 
there exists a constant $C_{1}>0$ such that 
\begin{equation}\label{b1}
\|-\Delta \overline{\theta}_{h}\|_{L^{2}(0, T; H)} \leq C_{1} 
\end{equation}
for all $h \in (0, h_{2})$. 
Thus we can prove Lemma \ref{timedisces2} 
by Lemma \ref{timedisces1}, \eqref{b1} and the elliptic regularity theory. 
\end{proof}
\begin{lem}\label{timedisces3}
Let $h_{2}$ be as in Lemma \ref{timedisces1}. 
Then there exist constants $C>0$ and $h_{3} \in (0, h_{2})$ 
depending on the data such that 
\begin{align*} 
\|\overline{v}_{h}\|_{L^{\infty}(\Omega\times(0, T))}^2 
+ \|\overline{\varphi}_{h}\|_{L^{\infty}(\Omega\times(0, T))}^2 
\leq C
\end{align*}
for all $h \in (0, h_{3})$.  
\end{lem}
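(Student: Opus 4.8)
The plan is to rerun the energy argument of Lemma \ref{timedisces1} for the $(\varphi,v)$-part of \ref{Pn}, but pointwise in $x\in\Omega$: instead of integrating the tested equations over $\Omega$, I would keep them as scalar inequalities at a.a.\ $x$ and take the essential supremum over $\Omega$ only at the very end. Fix $h\in(0,h_2)$; by Theorem \ref{maintheorem1} one has $\theta_n\in W\subset L^{\infty}(\Omega)$ and $\varphi_n,v_n\in L^{\infty}(\Omega)$ for all $n$, so every quantity below is finite. For a.a.\ $x\in\Omega$ I multiply the second equation of \ref{Pn} by $hv_{n+1}(x)$, use $z_{n+1}=(v_{n+1}-v_n)/h$ and $hv_{n+1}=\varphi_{n+1}-\varphi_n$, the elementary identity $a(a-b)=\tfrac12a^2-\tfrac12b^2+\tfrac12(a-b)^2$, the pointwise subdifferential inequality $\beta(\varphi_{n+1})(\varphi_{n+1}-\varphi_n)\ge\widehat\beta(\varphi_{n+1})-\widehat\beta(\varphi_n)$ from (A2), the bound $|\pi(r)|\le|\pi(0)|+\|\pi'\|_{L^{\infty}(\mathbb R)}|r|$ from (A3) and the Young inequality; in addition I multiply $\varphi_{n+1}(x)=\varphi_n(x)+hv_{n+1}(x)$ by $\varphi_{n+1}(x)$ and treat it likewise. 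Adding the two resulting inequalities, dropping the nonnegative squared-increment terms and summing over $n=0,\dots,m-1$ with $1\le m\le N$, I arrive, with $E_m(x):=\tfrac12v_m(x)^2+\widehat\beta(\varphi_m(x))+\tfrac12\varphi_m(x)^2\ge0$, at
\[
E_m(x)\le E_0(x)+\tfrac h2\sum_{n=0}^{m-1}\theta_n(x)^2+C_1h\sum_{n=0}^{m-1}\bigl(v_{n+1}(x)^2+\varphi_{n+1}(x)^2+\varphi_n(x)^2\bigr)+C_1h\sum_{n=0}^{m-1}\bigl((a(\cdot)\varphi_n-J\ast\varphi_n)(x)\bigr)^2+C_1T
\]
for a.a.\ $x\in\Omega$, with $C_1>0$ independent of $h$ and $m$.

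Next I would estimate the ``data-type'' terms uniformly in $x$ and $h$. By (A4) and Remark \ref{remark about betahatverphi0}, $E_0\in L^{\infty}(\Omega)$. By (A1), $\|a\|_{L^{\infty}(\Omega)}\le\sup_x\int_\Omega|J(x-y)|\,dy<\infty$ and $|(J\ast\varphi_n)(x)|\le\bigl(\sup_x\int_\Omega|J(x-y)|\,dy\bigr)\|\varphi_n\|_{L^{\infty}(\Omega)}$, so $((a(\cdot)\varphi_n-J\ast\varphi_n)(x))^2\le C_2\|\varphi_n\|_{L^{\infty}(\Omega)}^2$ for a.a.\ $x$. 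Since $d\le3$, $W\hookrightarrow H^2(\Omega)\hookrightarrow L^{\infty}(\Omega)$, hence $\theta_n(x)^2\le\|\theta_n\|_{L^{\infty}(\Omega)}^2\le C_3\|\theta_n\|_W^2$ for $n\ge1$; together with $\theta_0\in L^{\infty}(\Omega)$ from (A4) and Lemma \ref{timedisces2} this yields $h\sum_{n=0}^{m-1}\theta_n(x)^2\le h\|\theta_0\|_{L^{\infty}(\Omega)}^2+C_3\|\overline{\theta}_h\|_{L^2(0,T;W)}^2\le C_4$ for a.a.\ $x$ and all $h\in(0,h_2)$.

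Then I would take the essential supremum over $x\in\Omega$ in the inequality of the first step, using the previous bounds, the fact that the essential supremum of a finite sum is at most the sum of the essential suprema, and $\|v_k\|_{L^{\infty}(\Omega)}^2=\operatorname{ess\,sup}_{x\in\Omega}v_k(x)^2$, to obtain, after relabelling the summation index,
\[
\|E_m\|_{L^{\infty}(\Omega)}\le C_5+C_5h\sum_{n=0}^{m}\bigl(\|v_n\|_{L^{\infty}(\Omega)}^2+\|\varphi_n\|_{L^{\infty}(\Omega)}^2\bigr).
\]
Since $\|v_n\|_{L^{\infty}(\Omega)}^2+\|\varphi_n\|_{L^{\infty}(\Omega)}^2\le2\|E_n\|_{L^{\infty}(\Omega)}$, I fix $h_3\in(0,h_2)$ so small that $4C_5h_3\le1$; for $h\in(0,h_3)$ the $n=m$ term is absorbed into the left-hand side, giving $\|E_m\|_{L^{\infty}(\Omega)}\le C_6+C_6h\sum_{n=0}^{m-1}\|E_n\|_{L^{\infty}(\Omega)}$ for $m=1,\dots,N$. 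The discrete Gronwall lemma (\cite[Prop.\ 2.2.1]{Jerome}) then yields $\|E_m\|_{L^{\infty}(\Omega)}\le C_6e^{C_6T}$ for all $m$, and since $\|v_m\|_{L^{\infty}(\Omega)}^2+\|\varphi_m\|_{L^{\infty}(\Omega)}^2\le2\|E_m\|_{L^{\infty}(\Omega)}$, the definitions \eqref{line1}--\eqref{line2} of $\overline{v}_h$ and $\overline{\varphi}_h$ give the asserted bound.

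The genuine difficulty is the nonlocal coupling: the terms $a(\cdot)\varphi_n$ and $J\ast\varphi_n$ are not controlled by $\varphi_n$ at the same point $x$, so no purely pointwise-in-$x$ discrete Gronwall argument is available. The remedy is to bound these terms by $\|\varphi_n\|_{L^{\infty}(\Omega)}$ via (A1) and then to run the Gronwall argument for the scalar sequence $\|E_m\|_{L^{\infty}(\Omega)}$ rather than for $E_m(x)$ at a fixed $x$; this is also why the second step — and with it the embedding $W\hookrightarrow L^{\infty}(\Omega)$, valid precisely because $d\le3$ — is needed, namely to feed Lemma \ref{timedisces2} into the estimate of $h\sum_n\theta_n(x)^2$. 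Everything else (the algebraic identities, the convexity inequality for $\widehat\beta$, the Young estimates) is the routine pointwise transcription of the computations already carried out in Lemma \ref{timedisces1}.
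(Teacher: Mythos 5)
Your proposal is correct and follows essentially the same route as the paper: a pointwise-in-$x$ energy estimate for the second equation tested by $hv_{n+1}(x)$ together with the identity for $\varphi_{n+1}\varphi_{n+1}$, the convexity inequality for $\widehat\beta$, bounding the nonlocal terms by $\|\varphi_n\|_{L^{\infty}(\Omega)}$ via (A1), controlling $h\sum_n\|\theta_n\|_{L^{\infty}(\Omega)}^2$ through the embedding $W\hookrightarrow L^{\infty}(\Omega)$ and Lemma \ref{timedisces2}, then taking the essential supremum, absorbing the $m$-th terms for small $h$, and concluding with the discrete Gronwall lemma. Packaging the estimate through $E_m(x)$ is only a cosmetic difference from the paper's argument.
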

\begin{proof}
We derive from the identities $a(a-b) = \frac{1}{2}a^2 - \frac{1}{2}b^2 + \frac{1}{2}(a-b)^2$ 
($a, b \in \mathbb{R}$) and $h v_{n+1} = \varphi_{n+1} - \varphi_{n}$, 
the Young inequality  
that 
\begin{align}\label{c1}
&\frac{1}{2}|\varphi_{n+1}(x)|^2 - \frac{1}{2}|\varphi_{n}(x)|^2 
+ \frac{1}{2}|\varphi_{n+1}(x) - \varphi_{n}(x)|^2 
\\ \notag 
&= \varphi_{n+1}(x)(\varphi_{n+1}(x) - \varphi_{n}(x))
\\ \notag 
&= h\varphi_{n+1}(x)v_{n+1}(x) 
\\ \notag 
&\leq \frac{1}{2}h\|\varphi_{n+1}\|_{L^{\infty}(\Omega)}^2 
         + \frac{1}{2}h\|v_{n+1}\|_{L^{\infty}(\Omega)}^2.  
\end{align}
Testing the second equation in \ref{Ph} by $h v_{n+1}(x)$ 
and using (A1), (A3), the Young inequality 
mean that there exists a constant $C_{1}>0$ such that 
\begin{align}\label{c2}
&\frac{1}{2}|v_{n+1}(x)|^2 - \frac{1}{2}|v_{n}(x)|^2 
+ \frac{1}{2}|v_{n+1}(x) - v_{n}(x)|^2 
\\ \notag 
&\,\quad+ \beta(\varphi_{n+1}(x))(\varphi_{n+1}(x) - \varphi_{n}(x)) 
\\ \notag 
& = h\bigl(\theta_{n}(x) -a(x)\varphi_{n}(x) + (J\ast\varphi_{n})(x) 
                  + \pi(0) - \pi(\varphi_{n+1}(x)) - \pi(0) \bigr)v_{n+1}(x) 
\\ \notag 
&\leq \frac{1}{2}h\|\theta_{n}\|_{L^{\infty}(\Omega)}^2 
         + \frac{1}{2}h\|-a(\cdot)\varphi_{n} + J\ast\varphi_{n}\|_{L^{\infty}(\Omega)}^2 
\\ \notag 
   &\,\quad+ \frac{\|\pi'\|_{L^{\infty}(\mathbb{R})}^2 }{2}h
                    \|\varphi_{n+1}\|_{L^{\infty}(\Omega)}^2                                                                                                 
      + \frac{|\pi(0)|^2}{2}h  
      + 2h\|v_{n+1}\|_{L^{\infty}(\Omega)}^2 
\\ \notag 
&\leq \frac{1}{2}h\|\theta_{n}\|_{L^{\infty}(\Omega)}^2 
         + C_{1}h\|\varphi_{n}\|_{L^{\infty}(\Omega)}^2 
\\ \notag 
   &\,\quad+ \frac{\|\pi'\|_{L^{\infty}(\mathbb{R})}^2 }{2}h
                    \|\varphi_{n+1}\|_{L^{\infty}(\Omega)}^2                                                                                                 
      + \frac{|\pi(0)|^2}{2}h  
      + 2h\|v_{n+1}\|_{L^{\infty}(\Omega)}^2 
\end{align}
for all $h \in (0, h_{2})$ and a.a.\ $x \in \Omega$. 
Here the condition (A2) and the definition of the subdifferential lead to the inequality  
\begin{align}\label{c3}
\beta(\varphi_{n+1}(x))(\varphi_{n+1}(x) - \varphi_{n}(x)) 
\geq \widehat{\beta}(\varphi_{n+1}(x)) - \widehat{\beta}(\varphi_{n}(x)). 
\end{align}
Thus it follows from \eqref{c1}-\eqref{c3}, 
summing over $n=0, ..., m-1$ with $1 \leq m \leq N$ 
and Remark \ref{remark about betahatverphi0} 
that   
\begin{align*}
&\frac{1}{2}|\varphi_{m}(x)|^2 + \frac{1}{2}|v_{m}(x)|^2 + \widehat{\beta}(\varphi_{m}(x)) 
\\ \notag 
&\leq \frac{1}{2}\|\varphi_{0}\|_{L^{\infty}(\Omega)}^2 
         + \frac{1}{2}\|v_{0}\|_{L^{\infty}(\Omega)}^2 
         + \|\widehat{\beta}(\varphi_{0})\|_{L^{\infty}(\Omega)} 
\\ \notag 
    &\,\quad+ \frac{1}{2}h\sum_{n=0}^{m-1}\|\theta_{n}\|_{L^{\infty}(\Omega)}^2 
         + C_{1}h\sum_{n=0}^{m-1}\|\varphi_{n}\|_{L^{\infty}(\Omega)}^2 
\\ \notag 
   &\,\quad + \frac{\|\pi'\|_{L^{\infty}(\mathbb{R})}^2 + 1}{2}h
                                        \sum_{n=0}^{m-1} \|\varphi_{n+1}\|_{L^{\infty}(\Omega)}^2 
+ \frac{5}{2}h\sum_{n=0}^{m-1}\|v_{n+1}\|_{L^{\infty}(\Omega)}^2 
+ \frac{|\pi(0)|^2}{2} T
\end{align*}
for all $h \in (0, h_{2})$, $m = 1, ..., N$ and a.a.\ $x \in \Omega$, which implies that   
\begin{align*}
&\frac{1}{2}\|\varphi_{m}\|_{L^{\infty}(\Omega)}^2 
+ \frac{1}{2}\|v_{m}\|_{L^{\infty}(\Omega)}^2 
\\ \notag 
&\leq \frac{1}{2}\|\varphi_{0}\|_{L^{\infty}(\Omega)}^2 
         + \frac{1}{2}\|v_{0}\|_{L^{\infty}(\Omega)}^2 
         + \|\widehat{\beta}(\varphi_{0})\|_{L^{\infty}(\Omega)} 
\\ \notag 
    &\,\quad+ \frac{1}{2}h\sum_{n=0}^{m-1}\|\theta_{n}\|_{L^{\infty}(\Omega)}^2 
         + C_{1}h\sum_{n=0}^{m-1}\|\varphi_{n}\|_{L^{\infty}(\Omega)}^2 
\\ \notag 
   &\,\quad + \frac{\|\pi'\|_{L^{\infty}(\mathbb{R})}^2 + 1}{2}h
                                        \sum_{n=0}^{m-1} \|\varphi_{n+1}\|_{L^{\infty}(\Omega)}^2 
+ \frac{5}{2}h\sum_{n=0}^{m-1}\|v_{n+1}\|_{L^{\infty}(\Omega)}^2 
+ \frac{|\pi(0)|^2}{2} T
\end{align*}
for all $h \in (0, h_{2})$ and $m = 1, ..., N$. 
Then the inequality 
\begin{align}\label{c4}
&\frac{1- (\|\pi'\|_{L^{\infty}(\mathbb{R})}^2 + 1)h}{2}
                                                     \|\varphi_{m}\|_{L^{\infty}(\Omega)}^2 
+ \frac{1-5h}{2}\|v_{m}\|_{L^{\infty}(\Omega)}^2 
\\ \notag 
&\leq \frac{1}{2}\|\varphi_{0}\|_{L^{\infty}(\Omega)}^2 
         + \frac{1}{2}\|v_{0}\|_{L^{\infty}(\Omega)}^2 
         + \|\widehat{\beta}(\varphi_{0})\|_{L^{\infty}(\Omega)}
\\ \notag 
    &\,\quad+ \frac{1}{2}h\sum_{j=0}^{m-1}\|\theta_{j}\|_{L^{\infty}(\Omega)}^2 
+ \frac{2C_{1} + \|\pi'\|_{L^{\infty}(\mathbb{R})}^2 + 1}{2}h
                                        \sum_{j=0}^{m-1} \|\varphi_{j}\|_{L^{\infty}(\Omega)}^2 
\\ \notag 
   &\,\quad+ \frac{5}{2}h\sum_{j=0}^{m-1}\|v_{j}\|_{L^{\infty}(\Omega)}^2 
+ \frac{|\pi(0)|^2}{2} T
\end{align}
holds for all $h \in (0, h_{2})$ and $m = 1, ..., N$. 
Here we see from 
the continuity of the embedding $W \hookrightarrow L^{\infty}(\Omega)$
and Lemma \ref{timedisces2}  
that there exist constants $C_{2}, C_{3} > 0$ such that 
\begin{align}\label{c5} 
\displaystyle h\sum_{j=1}^{N}\|\theta_{j}\|_{L^{\infty}(\Omega)}^2 = 
\|\overline{\theta}_{h}\|_{L^2(0, T; L^{\infty}(\Omega))}^2 
\leq C_{2}\|\overline{\theta}_{h}\|_{L^2(0, T; W)}^2 
\leq C_{3} 
\end{align}
for all $h \in (0, h_{2})$. 
Therefore we have from \eqref{c4} and \eqref{c5} 
that there exist constants $C_{4}>0$ and $h_{3} \in (0, h_{2})$ such that  
\begin{align*}
&\|\varphi_{m}\|_{L^{\infty}(\Omega)}^2 +\|v_{m}\|_{L^{\infty}(\Omega)}^2 
\\ \notag 
&\leq C_{4} 
           + C_{4}h\sum_{j=0}^{m-1} \|\varphi_{j}\|_{L^{\infty}(\Omega)}^2 
             + C_{4}h\sum_{j=0}^{m-1}\|v_{j}\|_{L^{\infty}(\Omega)}^2 
\end{align*}
for all $h \in (0, h_{3})$ and $m = 1, ..., N$. 
Then we can obtain that 
there exists a constant $C_{5}>0$ such that 
\begin{equation*}
\|\varphi_{m}\|_{L^{\infty}(\Omega)}^2 +\|v_{m}\|_{L^{\infty}(\Omega)}^2 
\leq C_{5} 
\end{equation*}
for all $h \in (0, h_{3})$ and $m = 1, ..., N$ 
by the discrete Gronwall lemma (see e.g., \cite[Prop.\ 2.2.1]{Jerome}). 
\end{proof}
\begin{lem}\label{timedisces4}
Let $h_{3}$ be as in Lemma \ref{timedisces3}. 
Then there exists a constant $C>0$ depending on the data such that 
\begin{align*} 
\|\underline{\varphi}_{h}\|_{L^{\infty}(\Omega\times(0, T))}^2 
+ \|\underline{\theta}_{h}\|_{L^{\infty}(0, T; V) \cap L^{2}(0, T; L^{\infty}(\Omega))}^2 
\leq C
\end{align*}
for all $h \in (0, h_{3})$.  
\end{lem}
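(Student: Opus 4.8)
The plan is to bound each of the three quantities using estimates already established, essentially by shifting indices and comparing the ``underlined'' (staircase-from-below) functions with the ``overlined'' ones. First I would observe that $\underline{\varphi}_h$ is the step function taking the value $\varphi_n$ on $(nh,(n+1)h]$, while $\overline{\varphi}_h$ takes the value $\varphi_{n+1}$ there; hence for each $h$ the set of values $\{\varphi_0,\varphi_1,\dots,\varphi_N\}$ attained by the two functions differs only by the endpoints $\varphi_0$ (extra in $\underline{\varphi}_h$) and $\varphi_N$ (extra in $\overline{\varphi}_h$). Consequently
\[
\|\underline{\varphi}_h\|_{L^\infty(\Omega\times(0,T))}
= \max\bigl\{\|\varphi_0\|_{L^\infty(\Omega)},\ \|\overline{\varphi}_h\|_{L^\infty(\Omega\times(0,T))}\bigr\},
\]
so the first term is controlled by (A4) and Lemma \ref{timedisces3}.

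Next, for the $L^\infty(0,T;V)$-norm of $\underline{\theta}_h$, the same index-shift identity gives
\[
\|\underline{\theta}_h\|_{L^\infty(0,T;V)}
= \max\bigl\{\|\theta_0\|_V,\ \|\overline{\theta}_h\|_{L^\infty(0,T;V)}\bigr\},
\]
which is bounded by (A4) and Lemma \ref{timedisces1}. For the $L^2(0,T;L^\infty(\Omega))$-norm, I would write
\[
\|\underline{\theta}_h\|_{L^2(0,T;L^\infty(\Omega))}^2
= h\sum_{n=0}^{N-1}\|\theta_n\|_{L^\infty(\Omega)}^2
\leq h\|\theta_0\|_{L^\infty(\Omega)}^2 + h\sum_{n=1}^{N}\|\theta_n\|_{L^\infty(\Omega)}^2,
\]
and then use the continuity of the embedding $W\hookrightarrow L^\infty(\Omega)$ together with Lemma \ref{timedisces2} exactly as in \eqref{c5}; the term $h\|\theta_0\|_{L^\infty(\Omega)}^2 \leq T\|\theta_0\|_{L^\infty(\Omega)}^2$ is harmless thanks to $\theta_0\in L^\infty(\Omega)$ in (A4). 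Collecting these three bounds yields the claimed uniform estimate for all $h\in(0,h_3)$.

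I do not anticipate a genuine obstacle here: the lemma is essentially a bookkeeping corollary that transfers the a priori bounds on $\overline{\varphi}_h$, $\overline{\theta}_h$ (in $L^\infty(0,T;V)$ and in $L^2(0,T;W)$) from Lemmas \ref{timedisces1}--\ref{timedisces3} to the ``from-below'' step functions. The only mildly delicate point is making sure the initial data $\theta_0\in H^1(\Omega)\cap L^\infty(\Omega)$, $\varphi_0\in L^\infty(\Omega)$ supply the extra endpoint contributions with constants independent of $h$, which is immediate from (A4) since these are fixed functions and the extra weight $h$ is at most $T$.
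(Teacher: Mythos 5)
Your argument is correct and is exactly the paper's intended proof: the paper disposes of this lemma in one line by citing Lemmas \ref{timedisces1}--\ref{timedisces3}, the embedding $W \hookrightarrow L^{\infty}(\Omega)$ and (A4), which is precisely the index-shift bookkeeping you carry out explicitly. One cosmetic remark: the displayed identities for $\|\underline{\varphi}_h\|_{L^\infty(\Omega\times(0,T))}$ and $\|\underline{\theta}_h\|_{L^\infty(0,T;V)}$ should strictly be inequalities $\leq$ (the underlined functions omit the value at the final node $N$, so equality with the stated maximum need not hold), but only the $\leq$ direction is used, so the proof stands.
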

\begin{proof}
We can verify this lemma by Lemmas \ref{timedisces1}-\ref{timedisces3}, 
the continuity of the embedding $W \hookrightarrow L^{\infty}(\Omega)$ and (A4). 
\end{proof}
\begin{lem}\label{timedisces5}
Let $h_{3}$ be as in Lemma \ref{timedisces3}. 
Then there exists a constant $C>0$ depending on the data such that 
\begin{align*} 
\|\overline{z}_{h}\|_{L^2(0, T; L^{\infty}(\Omega))}^2 \leq C
\end{align*}
for all $h \in (0, h_{3})$.  
\end{lem}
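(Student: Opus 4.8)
The plan is to solve the second equation of \ref{Ph} algebraically for $\overline{z}_{h}$ and then bound the right-hand side termwise using the uniform estimates already obtained. Concretely, I would start from
\[
\overline{z}_{h} = \underline{\theta}_{h} - \overline{v}_{h}
- a(\cdot)\underline{\varphi}_{h} + J\ast\underline{\varphi}_{h}
- \beta(\overline{\varphi}_{h}) - \pi(\overline{\varphi}_{h})
\quad \mbox{a.e.\ in}\ \Omega\times(0, T),
\]
which is nothing but the second line of \ref{Ph} rearranged, and then estimate $\|\overline{z}_{h}(t)\|_{L^{\infty}(\Omega)}$ for a.a.\ $t \in (0, T)$ by the triangle inequality.

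Next I would dispose of the six terms on the right. The term $\overline{v}_{h}$ is bounded in $L^{\infty}(\Omega\times(0,T))$ by Lemma \ref{timedisces3}, and $\underline{\varphi}_{h}, \overline{\varphi}_{h}$ are bounded in $L^{\infty}(\Omega\times(0,T))$ by Lemmas \ref{timedisces3} and \ref{timedisces4}. For $a(\cdot)\underline{\varphi}_{h}$ and $J\ast\underline{\varphi}_{h}$ I would use (A1): the hypothesis $\sup_{x\in\Omega}\int_{\Omega}|J(x-y)|\,dy < +\infty$ gives $a \in L^{\infty}(\Omega)$ and, via this same bound together with the $L^{\infty}(\Omega\times(0,T))$ estimate on $\underline{\varphi}_{h}$, a uniform $L^{\infty}$ bound on $J\ast\underline{\varphi}_{h}$. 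For $\pi(\overline{\varphi}_{h})$ the Lipschitz continuity of $\pi$ in (A3) together with the $L^{\infty}$ bound on $\overline{\varphi}_{h}$ yields a uniform $L^{\infty}$ bound. Hence each of these five terms is in fact bounded in $L^{\infty}(0,T;L^{\infty}(\Omega))$, uniformly in $h \in (0, h_{3})$.

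The only term needing slightly more care is $\beta(\overline{\varphi}_{h})$, since $\beta$ is not globally Lipschitz: here I would invoke the uniform $L^{\infty}(\Omega\times(0,T))$ bound on $\overline{\varphi}_{h}$ from Lemma \ref{timedisces3} to confine the argument of $\beta$ to a fixed compact interval of $\mathbb{R}$ independent of $h$, and then use the local Lipschitz continuity (equivalently, boundedness on compacts) of $\beta$ from (A2) to obtain a uniform $L^{\infty}$ bound on $\beta(\overline{\varphi}_{h})$ as well. Finally, the remaining term $\underline{\theta}_{h}$ is bounded in $L^{2}(0,T;L^{\infty}(\Omega))$ by Lemma \ref{timedisces4}; combining this with the five $L^{\infty}(0,T;L^{\infty}(\Omega))$ bounds gives $\|\overline{z}_{h}\|_{L^{2}(0,T;L^{\infty}(\Omega))} \leq C$ for all $h \in (0, h_{3})$, as claimed.

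I do not expect a genuine obstacle in this lemma; the two points to be attentive to are that $\underline{\theta}_{h}$ controls only an $L^{2}$-in-time norm — which is exactly why the conclusion is phrased in $L^{2}(0,T;L^{\infty}(\Omega))$ rather than in a stronger norm — and that the non-Lipschitz $\beta$ must be handled through the a priori $L^{\infty}$ bound of Lemma \ref{timedisces3} rather than directly.
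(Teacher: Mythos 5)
Your proposal is correct and follows essentially the same route as the paper: rearrange the second equation of \ref{Ph} for $\overline{z}_{h}$, bound $\beta(\overline{\varphi}_{h})$ in $L^{\infty}(\Omega\times(0,T))$ via Lemma \ref{timedisces3} and the continuity of $\beta$, control the remaining terms by (A1), (A3), Lemmas \ref{timedisces3} and \ref{timedisces4}, with $\underline{\theta}_{h}$ supplying only the $L^{2}(0,T;L^{\infty}(\Omega))$ regularity. No gaps.
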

\begin{proof}
Since it follows from Lemma \ref{timedisces3} and the continuity of $\beta$ 
that there exists a constant $C_{1}>0$ such that 
\begin{equation*}
\|\beta(\overline{\varphi}_{h})\|_{L^{\infty}(\Omega\times(0, T))} \leq C_{1}
\end{equation*}
for all $h \in (0, h_{3})$, 
we can confirm that Lemma \ref{timedisces5} holds 
by the second equation in \ref{Ph}, (A1), (A3), 
Lemmas \ref{timedisces3} and \ref{timedisces4}. 
\end{proof}
\begin{lem}\label{timedisces6}
Let $h_{3}$ be as in Lemma \ref{timedisces3}. 
Then there exist constants $C>0$ and $h_{4} \in (0, h_{3})$ 
depending on the data such that 
\begin{align*} 
\|\overline{z}_{h}\|_{L^{\infty}(0, T; H)}^2 \leq C 
\end{align*}
for all $h \in (0, h_{4})$.  
\end{lem}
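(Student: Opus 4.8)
The plan is to obtain a uniform bound on the discrete acceleration $z_{n+1}$ by ``differentiating'' the second equation of \ref{Pn} in the discrete time variable and then testing the result by $z_{n+1}$. Concretely, I would first write the second equation in \ref{Pn} at the level $n+1$, subtract the corresponding equation at the level $n$ (which is available for $n=1,\dots,N-1$, since it involves $\varphi_{n-1}$, $\theta_{n-1}$, $v_{n-1}$), and use the relations $v_{n+1}-v_{n}=hz_{n+1}$ and $\varphi_{n}-\varphi_{n-1}=hv_{n}$ together with the linearity of the convolution, to arrive at
\begin{align*}
&(z_{n+1}-z_{n})+hz_{n+1}+h\bigl(a(\cdot)v_{n}-J\ast v_{n}\bigr)
\\
&\qquad+\bigl(\beta(\varphi_{n+1})-\beta(\varphi_{n})\bigr)+\bigl(\pi(\varphi_{n+1})-\pi(\varphi_{n})\bigr)=\theta_{n}-\theta_{n-1}\quad\mbox{in }\Omega
\end{align*}
for $n=1,\dots,N-1$. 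Testing this identity by $z_{n+1}$ in $H$ and applying the identity $a(a-b)=\tfrac12a^{2}-\tfrac12b^{2}+\tfrac12(a-b)^{2}$ to the leading term gives, on the left-hand side, the quantity $\tfrac12\|z_{n+1}\|_{H}^{2}-\tfrac12\|z_{n}\|_{H}^{2}+\tfrac12\|z_{n+1}-z_{n}\|_{H}^{2}+h\|z_{n+1}\|_{H}^{2}$.

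Next I would estimate the remaining terms. By (A1) and Lemma \ref{timedisces1} the quantity $a(\cdot)v_{n}-J\ast v_{n}$ is bounded in $H$ uniformly in $n$ and $h$, so the Young inequality controls $h(a(\cdot)v_{n}-J\ast v_{n},z_{n+1})_{H}$ by $Ch+\varepsilon h\|z_{n+1}\|_{H}^{2}$. By Lemma \ref{timedisces3} (and (A4) for $\varphi_{0}$) all the functions $\varphi_{n}$ take values in one fixed bounded interval for every $h\in(0,h_{3})$, so (A2) supplies a Lipschitz constant $L_{\ast}$ of $\beta$ on that interval; using $\varphi_{n+1}-\varphi_{n}=hv_{n+1}$, this bounds $(\beta(\varphi_{n+1})-\beta(\varphi_{n}),z_{n+1})_{H}$ by $L_{\ast}h\|v_{n+1}\|_{H}\|z_{n+1}\|_{H}\le Ch\|v_{n+1}\|_{H}^{2}+\varepsilon h\|z_{n+1}\|_{H}^{2}$, and the $\pi$-term is handled in the same way by (A3). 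The right-hand side is treated by $(\theta_{n}-\theta_{n-1},z_{n+1})_{H}\le\tfrac{1}{4\varepsilon h}\|\theta_{n}-\theta_{n-1}\|_{H}^{2}+\varepsilon h\|z_{n+1}\|_{H}^{2}$. Summing over $n=1,\dots,m-1$ for $2\le m\le N$ and invoking Lemma \ref{timedisces1} — which in particular yields $h\sum_{n}\|v_{n+1}\|_{H}^{2}\le C$ and, since $\|(\widehat{\theta}_{h})_{t}\|_{L^{2}(0,T;H)}^{2}=\tfrac1h\sum_{n}\|\theta_{n+1}-\theta_{n}\|_{H}^{2}\le C$, also $\sum_{n}\|\theta_{n}-\theta_{n-1}\|_{H}^{2}\le Ch$ — together with the bound $\|z_{1}\|_{H}\le C$ (read off directly from the second equation in \ref{Pn} with $n=0$ by (A1), (A4), Lemma \ref{timedisces1} and Lemma \ref{timedisces3}, since $\beta$ and $\pi$ are continuous), I would be left with an inequality of the form
\[
\|z_{m}\|_{H}^{2}\le C+Ch\|z_{m}\|_{H}^{2}+Ch\sum_{j=2}^{m-1}\|z_{j}\|_{H}^{2}\qquad(m=1,\dots,N).
\]
Choosing $h_{4}\in(0,h_{3})$ so small that $Ch_{4}\le\tfrac12$ lets one absorb the term $Ch\|z_{m}\|_{H}^{2}$ into the left-hand side, and the discrete Gronwall lemma (see e.g., \cite[Prop.\ 2.2.1]{Jerome}) then gives $\|z_{m}\|_{H}^{2}\le C$ for all $m$ and all $h\in(0,h_{4})$, which is precisely the asserted estimate for $\|\overline{z}_{h}\|_{L^{\infty}(0,T;H)}$.

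The only genuinely delicate point is that $\beta$ is merely locally Lipschitz, so one must commit to the already established $L^{\infty}$-bound for $\{\overline{\varphi}_{h}\}_{h}$ from Lemma \ref{timedisces3} in order to have a fixed Lipschitz constant for $\beta$ covering the range of all the $\varphi_{n}$; this is what confines the argument to $h<h_{3}$. Everything else is a routine discrete energy computation, and the passage to $h_{4}<h_{3}$ enters only in the absorption of the boundary term $\|z_{m}\|_{H}^{2}$ produced by the $\varepsilon$-Young splitting.
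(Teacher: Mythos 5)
Your argument is correct and follows essentially the same route as the paper: bound $\|z_{1}\|_{H}$ from the $n=0$ equation, take the difference of consecutive second equations to get exactly the identity with $h\,a(\cdot)v_{n}-h\,J\ast v_{n}$ and the $\beta,\pi$ increments, test by $z_{n+1}$, use the $L^{\infty}$ bounds of Lemma \ref{timedisces3} for the local Lipschitz constant of $\beta$ and the bound on $h\sum_{n}\|\tfrac{\theta_{n+1}-\theta_{n}}{h}\|_{H}^{2}$ from Lemma \ref{timedisces1}, then sum, absorb the $h\|z_{m}\|_{H}^{2}$ term for small $h$, and apply the discrete Gronwall lemma. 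The only cosmetic deviations (reading off $\|z_{1}\|_{H}\le C$ directly instead of testing by $z_{1}$, and controlling the $\beta,\pi$ increments via $h\sum_{n}\|v_{n+1}\|_{H}^{2}\le C$ rather than the pointwise $\|v_{n+1}\|_{H}\le C$) do not change the substance.
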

\begin{proof}
Since the second equation in \ref{Pn} leads to the identity 
\begin{align*}
z_{1} + hz_{1} + a(\cdot)\varphi_{0} - J\ast\varphi_{0} 
+ \beta(\varphi_{1}) + \pi(\varphi_{1}) = \theta_{0}, 
\end{align*}
it holds that 
\begin{align*}
&\|z_{1}\|_{H}^2 + h\|z_{1}\|_{H}^2 
\\ \notag 
&= - (a(\cdot)\varphi_{0} - J\ast\varphi_{0}, z_{1})_{H} 
  - (\beta(\varphi_{1}), z_{1})_{H} - (\pi(\varphi_{1}) , z_{1})_{H}
  + (\theta_{0}, z_{1})_{H}. 
\end{align*}
Thus we deduce from the Young inequality, (A1), the continuity of $\beta$, (A3), 
and Lemma \ref{timedisces3} that there exists a constant $C_{1}>0$ such that  
\begin{equation}\label{d1} 
\|z_{1}\|_{H}^2 \leq C_{1}
\end{equation}
for all $h \in (0, h_{3})$. 
Now we let $n \in \{1, ..., N-1\}$. 
Then we have from the second equation in \ref{Pn} that 
\begin{align}\label{d2}
&z_{n+1} - z_{n} + hz_{n+1} 
+ ha(\cdot)v_{n} - hJ\ast v_{n} 
+ \beta(\varphi_{n+1}) - \beta(\varphi_{n}) 
\\ \notag 
&+ \pi(\varphi_{n+1}) - \pi(\varphi_{n}) 
= \theta_{n} - \theta_{n-1}. 
\end{align}
Moreover, we test \eqref{d2} by $z_{n+1}$, integrate over $\Omega$, 
recall (A1), Lemma \ref{timedisces3}, 
the local Lipschitz continuity of $\beta$, (A3), 
and use the Young inequality to infer that  
there exist constants $C_{2}, C_{3} > 0$ such that 
\begin{align}\label{d3}
&\frac{1}{2}\|z_{n+1}\|_{H}^2 - \frac{1}{2}\|z_{n}\|_{H}^2 
+ \frac{1}{2}\| z_{n+1} - z_{n}\|_{H}^2  
+ h\|z_{n+1}\|_{H}^2 
\\ \notag 
&= -h(a(\cdot)v_{n} - J\ast v_{n}, z_{n+1})_{H} 
    - h\Bigl(\frac{\beta(\varphi_{n+1}) - \beta(\varphi_{n})}{h}, z_{n+1} \Bigr)_{H} 
\\ \notag 
      &\,\quad- h\Bigl(\frac{\pi(\varphi_{n+1}) - \pi(\varphi_{n})}{h}, z_{n+1} \Bigr)_{H} 
    + h\Bigl(\frac{\theta_{n} - \theta_{n-1}}{h}, z_{n+1} \Bigr)_{H} 
\\ \notag 
&\leq C_{2}h\|v_{n}\|_{H}\|z_{n+1}\|_{H} + C_{2}h\|v_{n+1}\|_{H}\|z_{n+1}\|_{H} 
         + h\Bigl\|\frac{\theta_{n} - \theta_{n-1}}{h}\Bigr\|_{H}\|z_{n+1}\|_{H} 
\\ \notag 
&\leq C_{3}h\|z_{n+1}\|_{H} 
            + h\Bigl\|\frac{\theta_{n} - \theta_{n-1}}{h}\Bigr\|_{H}\|z_{n+1}\|_{H} 
\\ \notag 
&\leq h\|z_{n+1}\|_{H}^2 
        + \frac{1}{2}h\Bigl\|\frac{\theta_{n} - \theta_{n-1}}{h}\Bigr\|_{H}^2 
        + \frac{C_{3}^2}{2}h
\end{align}
for all $h \in (0, h_{3})$. 
Thus, summing \eqref{d3} over $n = 1, ..., \ell-1$ with $2 \leq \ell \leq N$, 
we see from \eqref{d1} and Lemma \ref{timedisces1} that 
there exists a constant $C_{4}>0$ such that 
\begin{align*}
\frac{1}{2}\|z_{\ell}\|_{H}^2 
&\leq \frac{1}{2}\|z_{1}\|_{H}^2 + h\sum_{n=1}^{\ell-1}\|z_{n+1}\|_{H}^2 
       +\frac{1}{2}h\sum_{n=1}^{\ell-1}\Bigl\|\frac{\theta_{n} - \theta_{n-1}}{h}\Bigr\|_{H}^2 
       + \frac{C_{3}^2}{2}T   
\\ \notag 
&\leq C_{4} + h\sum_{n=1}^{\ell-1}\|z_{n+1}\|_{H}^2
\end{align*}
for all $h \in (0, h_{3})$ and $\ell = 2, ..., N$, whence 
we have from \eqref{d1} that 
there exist constants $C_{5}>0$ and $h_{4} \in (0, h_{3})$ such that  
\begin{equation*}
\|z_{m}\|_{H}^2 
\leq C_{5} + C_{5}h\sum_{j=0}^{m-1}\|z_{j}\|_{H}^2
\end{equation*}
for all $h \in (0, h_{4})$ and $m = 1, ..., N$. 
Therefore the discrete Gronwall lemma (see e.g., \cite[Prop.\ 2.2.1]{Jerome}) implies 
that there exists a constant $C_{6}>0$ such that 
\begin{equation*}
\|z_{m}\|_{H}^2 \leq C_{6} 
\end{equation*}
for all $h \in (0, h_{4})$ and $m = 1, ..., N$. 
\end{proof}
\begin{lem}\label{timedisces7}
Let $h_{4}$ be as in Lemma \ref{timedisces6}. 
Then there exists a constant $C>0$ depending on the data such that 
\begin{align*} 
&\|\widehat{\theta}_{h}\|_{H^{1}(0, T; H) \cap L^{\infty}(0, T; V)} 
+ \|\widehat{\varphi}_{h}\|_{W^{1, \infty}(0, T; L^{\infty}(\Omega))} 
\\ 
&+ \|\widehat{v}_{h}\|_{W^{1, \infty}(0, T; H) \cap W^{1, 2}(0, T; L^{\infty}(\Omega)) 
                                                          \cap L^{\infty}(0, T; L^{\infty}(\Omega))} 
\leq C  
\end{align*}
for all $h \in (0, h_{4})$.  
\end{lem}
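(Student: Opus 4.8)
The plan is to collect the uniform bounds already established in Lemmas~\ref{timedisces1}--\ref{timedisces6} and translate them into bounds on the piecewise-linear interpolants, using the elementary identities recorded in the Remark after Theorem~\ref{maintheorem3}. First I would observe that $(\widehat{\theta}_h)_t$ is, on each subinterval $(nh,(n+1)h)$, equal to the constant $\frac{\theta_{n+1}-\theta_n}{h}$, so that $\|(\widehat{\theta}_h)_t\|_{L^2(0,T;H)}^2 = h\sum_{n=0}^{N-1}\|\frac{\theta_{n+1}-\theta_n}{h}\|_H^2$, which is bounded by Lemma~\ref{timedisces1}. Combined with $\widehat{\theta}_h(0)=\theta_0$ this gives the $H^1(0,T;H)$-bound. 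For the $L^\infty(0,T;V)$-bound of $\widehat{\theta}_h$ I would use \eqref{rem1}, namely $\|\widehat{\theta}_h\|_{L^\infty(0,T;V)} = \max\{\|\theta_0\|_V,\|\overline{\theta}_h\|_{L^\infty(0,T;V)}\}$, together with the $V$-bound on $\overline{\theta}_h$ from Lemma~\ref{timedisces1} and (A4).

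Next I would treat $\widehat{\varphi}_h$ and $\widehat{v}_h$ in the same spirit. Since $(\widehat{\varphi}_h)_t$ equals the piecewise-constant function $\overline{v}_h$, the $W^{1,\infty}(0,T;L^\infty(\Omega))$-bound for $\widehat{\varphi}_h$ follows from the $L^\infty(\Omega\times(0,T))$-bound on $\overline{v}_h$ in Lemma~\ref{timedisces3}, the bound on $\overline{\varphi}_h$ in Lemma~\ref{timedisces3}, and \eqref{rem2} to pass from $\overline{\varphi}_h$ to $\widehat{\varphi}_h$. Likewise $(\widehat{v}_h)_t = \overline{z}_h$, so: the $W^{1,\infty}(0,T;H)$-bound for $\widehat{v}_h$ comes from Lemma~\ref{timedisces6} (for $\overline{z}_h$ in $L^\infty(0,T;H)$) together with Lemma~\ref{timedisces1} (for $v_m$ in $H$) and \eqref{rem6}; the $W^{1,2}(0,T;L^\infty(\Omega))$-bound comes from Lemma~\ref{timedisces5} (for $\overline{z}_h$ in $L^2(0,T;L^\infty(\Omega))$) together with the $L^\infty(\Omega\times(0,T))$-bound on $\overline{v}_h$ from Lemma~\ref{timedisces3}; and the $L^\infty(0,T;L^\infty(\Omega))$-bound for $\widehat{v}_h$ follows from \eqref{rem3} and Lemma~\ref{timedisces3}.

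Finally I would assemble these into the single constant $C$ claimed, valid for all $h\in(0,h_4)$ since $h_4 \le h_3 \le h_2 \le h_1$ and each invoked lemma holds on the corresponding (larger) interval. There is no real obstacle here: the lemma is a bookkeeping consequence of the a~priori estimates, and the only point requiring a word of care is the consistent passage from the piecewise-constant interpolants (for which the estimates were proved) to the piecewise-linear ones, which is exactly what the identities \eqref{rem1}--\eqref{rem8} are for. I would simply cite the relevant line of the Remark at each such step.
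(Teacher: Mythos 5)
Your proposal is correct and matches the paper's proof, which simply invokes \eqref{rem1}--\eqref{rem3} together with Lemmas \ref{timedisces1}, \ref{timedisces3}, \ref{timedisces5} and \ref{timedisces6} — exactly the ingredients you assemble, with the same identifications $(\widehat{\varphi}_h)_t=\overline{v}_h$ and $(\widehat{v}_h)_t=\overline{z}_h$. You merely write out the bookkeeping that the paper leaves implicit.
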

\begin{proof}
This lemma can be proved by \eqref{rem1}-\eqref{rem3}, 
Lemmas \ref{timedisces1}, \ref{timedisces3}, \ref{timedisces5} and \ref{timedisces6}. 
\end{proof}
\medskip

The following lemma asserts Cauchy's criterion for solutions of \ref{Ph}. 
\begin{lem}\label{Cauchy's criterion}
Let $h_{4}$ be as in Lemma \ref{timedisces6}. 
Then there exists a constant $C>0$  
depending on the data such that 
\begin{align*} 
&\|\widehat{v}_{h} - \widehat{v}_{\tau}\|_{C([0, T]; H)} 
+ \|\overline{v}_{h} - \overline{v}_{\tau}\|_{L^{2}(0, T; H)} 
+ \|\widehat{\varphi}_{h} - \widehat{\varphi}_{\tau}\|_{C([0, T]; H)} 
+ \|\widehat{\theta}_{h} - \widehat{\theta}_{\tau} \|_{C([0, T]; H)}
\\ \notag 
&+ \|\nabla(\overline{\theta}_{h} - \overline{\theta}_{\tau})\|_{L^{2}(0, T; H)}  
\leq C(h^{1/2} + \tau^{1/2}) 
               + C\|\overline{f}_{h} - \overline{f}_{\tau}\|_{L^{2}(0, T; H)}  
\end{align*}
for all $h, \tau \in (0, h_{4})$.  
\end{lem}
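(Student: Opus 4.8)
The plan is to run a difference‑of‑solutions energy estimate, comparing the two interpolants attached to the step sizes $h$ and $\tau$ in the spirit of Lemma~\ref{timedisces1}. First I would subtract the first line of \ref{Ph} for $\tau$ from the one for $h$ and test the result in $H$ by $\overline{\theta}_{h}-\overline{\theta}_{\tau}$; by the Neumann condition this produces $\|\nabla(\overline{\theta}_{h}-\overline{\theta}_{\tau})\|_{H}^{2}$ together with $((\widehat{\theta}_{h}-\widehat{\theta}_{\tau})_{t},\overline{\theta}_{h}-\overline{\theta}_{\tau})_{H}$, $((\widehat{\varphi}_{h}-\widehat{\varphi}_{\tau})_{t},\overline{\theta}_{h}-\overline{\theta}_{\tau})_{H}$ and the forcing $(\overline{f}_{h}-\overline{f}_{\tau},\overline{\theta}_{h}-\overline{\theta}_{\tau})_{H}$. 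Then I would subtract the second lines and test by $\overline{v}_{h}-\overline{v}_{\tau}=(\widehat{\varphi}_{h}-\widehat{\varphi}_{\tau})_{t}$, which yields $((\widehat{v}_{h}-\widehat{v}_{\tau})_{t},\overline{v}_{h}-\overline{v}_{\tau})_{H}+\|\overline{v}_{h}-\overline{v}_{\tau}\|_{H}^{2}$, the increments of $\beta$, $\pi$ and of the nonlocal operator tested against $\overline{v}_{h}-\overline{v}_{\tau}$, and the right-hand side $(\underline{\theta}_{h}-\underline{\theta}_{\tau},\overline{v}_{h}-\overline{v}_{\tau})_{H}$. Adding the two identities, the coupling contributions $((\widehat{\varphi}_{h}-\widehat{\varphi}_{\tau})_{t},\overline{\theta}_{h}-\overline{\theta}_{\tau})_{H}=(\overline{v}_{h}-\overline{v}_{\tau},\overline{\theta}_{h}-\overline{\theta}_{\tau})_{H}$ and $(\underline{\theta}_{h}-\underline{\theta}_{\tau},\overline{v}_{h}-\overline{v}_{\tau})_{H}$ nearly cancel, their difference being $(\overline{v}_{h}-\overline{v}_{\tau},h(\widehat{\theta}_{h})_{t}-\tau(\widehat{\theta}_{\tau})_{t})_{H}$ by \eqref{rem7}, a term bounded after integration by $\tfrac14\int_{0}^{t}\|\overline{v}_{h}-\overline{v}_{\tau}\|_{H}^{2}$ plus $O(h^{2}+\tau^{2})$ thanks to the $L^{2}(0,T;H)$ bound for $(\widehat{\theta}_{h})_{t}$ in Lemma~\ref{timedisces1}.

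Next, each term $((\widehat{\theta}_{h}-\widehat{\theta}_{\tau})_{t},\overline{\theta}_{h}-\overline{\theta}_{\tau})_{H}$ and $((\widehat{v}_{h}-\widehat{v}_{\tau})_{t},\overline{v}_{h}-\overline{v}_{\tau})_{H}$ is rewritten as $\tfrac12\tfrac{d}{dt}$ of the corresponding $H$-norm squared plus a remainder, by means of the identity $\overline{g}_{h}-\widehat{g}_{h}=(\widehat{g}_{h})_{t}\,((n+1)h-t)$ on $(nh,(n+1)h]$ (equivalently \eqref{rem4}--\eqref{rem8}), valid for each of $g=\theta,\varphi,v$. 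After integration in time the $\theta$-remainder is $O(h+\tau)$ by the $L^{2}(0,T;H)$ bound for $(\widehat{\theta}_{h})_{t}$ (Lemma~\ref{timedisces1}) and the $v$-remainder is $O(h+\tau)$ by the $L^{\infty}(0,T;H)$ bound for $\overline{z}_{h}$ (Lemma~\ref{timedisces6}). In order to keep $\|\widehat{\varphi}_{h}-\widehat{\varphi}_{\tau}\|_{H}^{2}$ in the energy — it will be needed on the left-hand side of the Gronwall step — I would also add the identity $\tfrac12\tfrac{d}{dt}\|\widehat{\varphi}_{h}-\widehat{\varphi}_{\tau}\|_{H}^{2}=(\overline{v}_{h}-\overline{v}_{\tau},\widehat{\varphi}_{h}-\widehat{\varphi}_{\tau})_{H}\leq\tfrac12\|\overline{v}_{h}-\overline{v}_{\tau}\|_{H}^{2}+\tfrac12\|\widehat{\varphi}_{h}-\widehat{\varphi}_{\tau}\|_{H}^{2}$.

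For the nonlinearities, the key point — emphasised in the second Remark — is that neither the monotonicity of $\beta$ nor the convexity of $\widehat{\beta}$ is available here, since the argument $\overline{\varphi}_{h}$ of $\beta$ and the test function $\overline{v}_{h}$ sit on different grids. Instead I rely on Lemma~\ref{timedisces3}: all $\overline{\varphi}_{h}$ (for $h\in(0,h_{4})$) take values in one fixed compact interval, on which $\beta$ is Lipschitz by (A2), so $|(\beta(\overline{\varphi}_{h})-\beta(\overline{\varphi}_{\tau}),\overline{v}_{h}-\overline{v}_{\tau})_{H}|\leq L\|\overline{\varphi}_{h}-\overline{\varphi}_{\tau}\|_{H}\|\overline{v}_{h}-\overline{v}_{\tau}\|_{H}$. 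The $\pi$-increment is handled in the same way via (A3), and the nonlocal increment via (A1) and Young's convolution inequality, which gives $\|a(\cdot)\psi-J\ast\psi\|_{H}\leq C\|\psi\|_{H}$. Using \eqref{rem5} and the analogous identity $\widehat{\varphi}_{h}-\underline{\varphi}_{h}=\overline{v}_{h}\,(t-nh)$ together with Lemma~\ref{timedisces3} one has $\|\overline{\varphi}_{h}-\overline{\varphi}_{\tau}\|_{H}$ and $\|\underline{\varphi}_{h}-\underline{\varphi}_{\tau}\|_{H}$ both $\leq\|\widehat{\varphi}_{h}-\widehat{\varphi}_{\tau}\|_{H}+C(h+\tau)$, so by the Young inequality all of these contributions are absorbed into a small multiple of $\|\overline{v}_{h}-\overline{v}_{\tau}\|_{H}^{2}$, a term $C\|\widehat{\varphi}_{h}-\widehat{\varphi}_{\tau}\|_{H}^{2}$, and $O(h^{2}+\tau^{2})$. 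The forcing term is controlled, after writing $\overline{\theta}_{h}-\overline{\theta}_{\tau}=(\widehat{\theta}_{h}-\widehat{\theta}_{\tau})+O(h+\tau)$, by $\tfrac12\|\overline{f}_{h}-\overline{f}_{\tau}\|_{H}^{2}+C\|\widehat{\theta}_{h}-\widehat{\theta}_{\tau}\|_{H}^{2}+C(h^{2}+\tau^{2})$.

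Collecting everything, with $\mathcal{E}(t):=\|\widehat{\theta}_{h}-\widehat{\theta}_{\tau}\|_{H}^{2}+\|\widehat{v}_{h}-\widehat{v}_{\tau}\|_{H}^{2}+\|\widehat{\varphi}_{h}-\widehat{\varphi}_{\tau}\|_{H}^{2}$ and noting $\mathcal{E}(0)=0$ since the three interpolants share the data $(\theta_{0},\varphi_{0},v_{0})$, one gets $\mathcal{E}(t)+c\int_{0}^{t}\|\overline{v}_{h}-\overline{v}_{\tau}\|_{H}^{2}+\int_{0}^{t}\|\nabla(\overline{\theta}_{h}-\overline{\theta}_{\tau})\|_{H}^{2}\leq C\int_{0}^{t}\mathcal{E}(s)\,ds+C(h+\tau)+C\|\overline{f}_{h}-\overline{f}_{\tau}\|_{L^{2}(0,T;H)}^{2}$ for every $t\in[0,T]$. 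Since $\widehat{\theta}_{h}$, $\widehat{\varphi}_{h}$, $\widehat{v}_{h}$ are continuous with values in $H$, the Gronwall lemma bounds $\sup_{[0,T]}\mathcal{E}$ and, substituted back, the two integral terms as well; taking square roots and summing then produces the asserted estimate with $C(h^{1/2}+\tau^{1/2})+C\|\overline{f}_{h}-\overline{f}_{\tau}\|_{L^{2}(0,T;H)}$. I expect the main obstacle to be twofold: the loss of the monotone/convex structure of $\beta$ across two distinct grids — which forces the detour through the uniform $L^{\infty}$ bounds of Section~\ref{Sec3} — and the careful bookkeeping needed to check that every discrepancy between the barred, underbarred and hatted functions, as well as the imperfect cancellation of the coupling terms, costs only $O(h+\tau)$.
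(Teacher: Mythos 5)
Your proposal is correct and follows essentially the same route as the paper's proof: an energy estimate for the differences of the interpolants based on the identities \eqref{rem4}--\eqref{rem8}, the uniform bounds of Lemmas \ref{timedisces1}--\ref{timedisces7} (in particular the local Lipschitz continuity of $\beta$ on the compact range supplied by Lemma \ref{timedisces3}, and the near-cancellation of the coupling terms up to $h(\widehat{\theta}_{h})_{t}-\tau(\widehat{\theta}_{\tau})_{t}$ via \eqref{rem7}), concluded with the Gronwall lemma. The only difference is bookkeeping: the paper tests the $\theta$-equation by $\widehat{\theta}_{h}-\widehat{\theta}_{\tau}$ and corrects the Laplacian term using Lemma \ref{timedisces2}, whereas you test by $\overline{\theta}_{h}-\overline{\theta}_{\tau}$ and correct the time-derivative term instead; both choices yield the same $O(h+\tau)$ remainders.
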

\begin{proof}
It holds that 
\begin{align}\label{s1}
&\frac{1}{2}\frac{d}{ds}\|\widehat{v}_{h}(s) - \widehat{v}_{\tau}(s)\|_{H}^2 
= (\overline{z}_{h}(s) - \overline{z}_{\tau}(s), 
                                          \widehat{v}_{h}(s) - \widehat{v}_{\tau}(s))_{H} 
\\ \notag 
&= (\overline{z}_{h}(s) - \overline{z}_{\tau}(s), 
                                          \widehat{v}_{h}(s) - \overline{v}_{h}(s))_{H} 
    + (\overline{z}_{h}(s) - \overline{z}_{\tau}(s), 
                                          \overline{v}_{h}(s) - \overline{v}_{\tau}(s))_{H} 
\\ \notag 
&\,\quad + (\overline{z}_{h}(s) - \overline{z}_{\tau}(s), 
                                          \overline{v}_{\tau}(s) - \widehat{v}_{\tau}(s))_{H}.  
\end{align}
Here we derive from the second equation in \ref{Ph} and \eqref{rem7} that 
\begin{align}\label{s2}
&(\overline{z}_{h}(s) - \overline{z}_{\tau}(s), 
                                          \overline{v}_{h}(s) - \overline{v}_{\tau}(s))_{H}  
\\ \notag 
&= -\|\overline{v}_{h}(s) - \overline{v}_{\tau}(s)\|_{H}^2 
\\ \notag 
   &\,\quad+ \bigl(-a(\cdot)(\underline{\varphi}_{h}(s) - \underline{\varphi}_{\tau}(s)) 
                  + J\ast(\underline{\varphi}_{h}(s) - \underline{\varphi}_{\tau}(s)),  
                                                            \overline{v}_{h}(s) - \overline{v}_{\tau}(s)
                                                                                                         \bigr)_{H} 
\\ \notag 
    &\,\quad- \bigl(\beta(\overline{\varphi}_{h}(s)) - \beta(\overline{\varphi}_{\tau}(s)), 
                                                 \overline{v}_{h}(s) - \overline{v}_{\tau}(s) \bigr)_{H}
\\ \notag 
    &\,\quad- \bigl(\pi(\overline{\varphi}_{h}(s)) - \pi(\overline{\varphi}_{\tau}(s)), 
                                                 \overline{v}_{h}(s) - \overline{v}_{\tau}(s) \bigr)_{H} 
\\ \notag 
&\,\quad + \bigl(-h(\widehat{\theta}_{h})_{t}(s) + \tau(\widehat{\theta}_{\tau})_{t}(s), 
                                                \overline{v}_{h}(s) - \overline{v}_{\tau}(s)  \bigr)_{H} 
+ (\overline{\theta}_{h}(s) - \overline{\theta}_{\tau}(s), 
                                          \overline{v}_{h}(s) - \overline{v}_{\tau}(s))_{H}. 
\end{align}
The property \eqref{rem8} means that 
\begin{align}\label{s3}
\|\underline{\varphi}_{h}(s) - \underline{\varphi}_{\tau}(s)\|_{H}^2 
&= \|- h(\widehat{\varphi}_{h})_{t}(s) + \tau(\widehat{\varphi}_{\tau})_{t}(s) 
                                + \overline{\varphi}_{h}(s) - \overline{\varphi}_{\tau}(s) \|_{H}^2  
\\ \notag 
&\leq 3h^2 \|(\widehat{\varphi}_{h})_{t}(s) \|_{H}^2 
         + 3\tau^2 \|(\widehat{\varphi}_{\tau})_{t}(s) \|_{H}^2 
         + 3\|\overline{\varphi}_{h}(s) - \overline{\varphi}_{\tau}(s)\|_{H}^2. 
\end{align}
We can obtain that 
\begin{align}\label{s4}
\|\overline{\varphi}_{h}(s) - \overline{\varphi}_{\tau}(s)\|_{H}^2 
&= \|\overline{\varphi}_{h}(s) - \widehat{\varphi}_{h}(s) 
         + \widehat{\varphi}_{h}(s) - \widehat{\varphi}_{\tau}(s)   
              + \widehat{\varphi}_{\tau}(s) - \overline{\varphi}_{\tau}(s) \|_{H}^2 
\\ \notag 
&\leq 3\|\overline{\varphi}_{h}(s) - \widehat{\varphi}_{h}(s)\|_{H}^2 
          + 3\|\widehat{\varphi}_{h}(s) - \widehat{\varphi}_{\tau}(s)\|_{H}^2 
          + 3\|\widehat{\varphi}_{\tau}(s) - \overline{\varphi}_{\tau}(s) \|_{H}^2    
\end{align}
and 
\begin{align}\label{s5}
\frac{1}{2}\frac{d}{ds}\|\widehat{\varphi}_{h}(s) - \widehat{\varphi}_{\tau}(s) \|_{H}^2 
= (\overline{v}_{h}(s) - \overline{v}_{\tau}(s), 
                                     \widehat{\varphi}_{h}(s) - \widehat{\varphi}_{\tau}(s) )_{H}.  
\end{align}
It follows from the first equation in \ref{Ph} that 
\begin{align}\label{s6}
&\frac{1}{2}\frac{d}{ds}\|\widehat{\theta}_{h}(s) - \widehat{\theta}_{\tau}(s)\|_{H}^2 
\\ \notag 
&= -\bigl(-\Delta (\overline{\theta}_{h}(s) - \overline{\theta}_{\tau}(s)), 
                                    \widehat{\theta}_{h}(s) - \widehat{\theta}_{\tau}(s) \bigr)_{H}
- (\overline{v}_{h}(s) - \overline{v}_{\tau}(s), 
                     \widehat{\theta}_{h}(s) - \widehat{\theta}_{\tau}(s))_{H} 
\\ \notag 
&\,\quad+ (\overline{f}_{h}(s) - \overline{f}_{\tau}(s), 
                     \widehat{\theta}_{h}(s) - \widehat{\theta}_{\tau}(s))_{H}   
\\ \notag 
&= -\|\nabla(\overline{\theta}_{h}(s) - \overline{\theta}_{\tau}(s))\|_{H}^2 
     - \bigl(-\Delta (\overline{\theta}_{h}(s) - \overline{\theta}_{\tau}(s)), 
                                    \widehat{\theta}_{h}(s) - \overline{\theta}_{h}(s) \bigr)_{H}
\\ \notag      
     &\,\quad-\bigl(-\Delta (\overline{\theta}_{h}(s) - \overline{\theta}_{\tau}(s)), 
                                    \overline{\theta}_{\tau}(s) - \widehat{\theta}_{\tau}(s) \bigr)_{H}
     - (\overline{\theta}_{h}(s) - \overline{\theta}_{\tau}(s), 
                                  \overline{v}_{h}(s) - \overline{v}_{\tau}(s))_{H} 
\\ \notag    
  &\,\quad- (\overline{v}_{h}(s) - \overline{v}_{\tau}(s), 
               \widehat{\theta}_{h}(s) - \overline{\theta}_{h}(s))_{H} 
- (\overline{v}_{h}(s) - \overline{v}_{\tau}(s), 
                \overline{\theta}_{\tau}(s) - \widehat{\theta}_{\tau}(s))_{H} 
\\ \notag 
  &\,\quad+ (\overline{f}_{h}(s) - \overline{f}_{\tau}(s), 
                     \widehat{\theta}_{h}(s) - \widehat{\theta}_{\tau}(s))_{H}.     
\end{align}
Therefore we have from \eqref{s1}-\eqref{s6}, 
the integration over $(0, t)$, where $t \in [0, T]$, 
the Schwarz inequality, the Young inequality, 
(A1), Lemma \ref{timedisces3}, 
the local Lipschitz continuity of $\beta$, (A3), \eqref{rem4}-\eqref{rem6}, 
Lemmas \ref{timedisces2}, \ref{timedisces6} and \ref{timedisces7} that 
there exists a constant $C_{1}>0$ such that 
\begin{align*}
&\|\widehat{v}_{h}(t) - \widehat{v}_{\tau}(t)\|_{H}^2 
+ \int_{0}^{t}\|\overline{v}_{h}(s) - \overline{v}_{\tau}(s)\|_{H}^2\,ds 
+ \|\widehat{\varphi}_{h}(t) - \widehat{\varphi}_{\tau}(t)\|_{H}^2 
+ \|\widehat{\theta}_{h}(t) - \widehat{\theta}_{\tau}(t)\|_{H}^2 
\\ \notag 
&\,\quad+ \int_{0}^{t} 
                  \|\nabla(\overline{\theta}_{h}(s) - \overline{\theta}_{\tau}(s))\|_{H}^2\,ds  
\\ \notag 
&\leq C_{1}(h + \tau) + C_{1}\|\overline{f}_{h} - \overline{f}_{\tau}\|_{L^{2}(0, T; H)}^2   
        + C_{1}\int_{0}^{t}\|\widehat{\varphi}_{h}(s) - \widehat{\varphi}_{\tau}(s)\|_{H}^2\,ds 
\\ \notag  
   &\,\quad
          + C_{1}\int_{0}^{t}\|\widehat{\theta}_{h}(s) - \widehat{\theta}_{\tau}(s)\|_{H}^2\,ds 
\end{align*}
for all $h, \tau \in (0, h_{4})$.  
Then we can prove Lemma \ref{Cauchy's criterion} by the Gronwall lemma. 
\end{proof}

\vspace{10pt}

\section{Existence and uniqueness for \eqref{P} and an error estimate}\label{Sec4}
In this section we verify Theorems \ref{maintheorem2} and \ref{maintheorem3}. 
\begin{prth1.2}
Since $\overline{f}_{h}$ converges to $f$ strongly in $L^{2}(0, T; H)$ 
as $h \searrow 0$ (see \cite[Section 5]{CK1}), 
we see from Lemmas \ref{timedisces1}-\ref{Cauchy's criterion}, 
\eqref{rem4}-\eqref{rem8} that 
there exist some functions 
\begin{align*}
    &\theta \in H^1(0, T; H) \cap L^{\infty}(0, T; V) \cap L^{2}(0, T; W),  
    \\
    &\varphi \in W^{2, \infty}(0, T; H) \cap W^{2, 2}(0, T; L^{\infty}(\Omega)) 
                                                        \cap W^{1, \infty}(0, T; L^{\infty}(\Omega)) 
    \end{align*}
such that 
\begin{align}
\label{weak1} 
&\widehat{\theta}_{h} \to \theta  
\quad \mbox{weakly$^{*}$ in}\ H^1(0, T; H) \cap L^{\infty}(0, T; V),   
\\[1mm]
\notag 
&\overline{\theta}_{h} \to \theta     
\quad \mbox{weakly$^{*}$ in}\ L^{\infty}(0, T; V),  
\\[1mm] 
\label{weak2}
&\overline{\theta}_{h} \to \theta    
\quad \mbox{weakly in}\ L^{2}(0, T; W),  
\\[1mm]
\label{weak3}
&\underline{\theta}_{h} \to \theta 
\quad \mbox{weakly$^{*}$ in}\ L^{\infty}(0, T; V) \cap L^{2}(0, T; L^{\infty}(\Omega)),  
\\[1mm] 
\label{strong1}
&\widehat{\theta}_{h} \to \theta     
\quad \mbox{strongly in}\ C([0, T]; H),  
\\[1mm] 
\notag 
&\overline{\theta}_{h} \to \theta     
\quad \mbox{strongly in}\ L^{2}(0, T; V),   
\\[1mm]
\label{weak4}
&\widehat{\varphi}_{h} \to \varphi     
\quad \mbox{weakly$^{*}$ in}\ W^{1, \infty}(0, T; L^{\infty}(\Omega)),  
\\[1mm] 
\notag 
&\overline{\varphi}_{h} \to \varphi     
\quad \mbox{weakly$^{*}$ in}\ L^{\infty}(\Omega\times(0, T)),  
\\[1mm] 
\notag 
&\underline{\varphi}_{h} \to \varphi     
\quad \mbox{weakly$^{*}$ in}\ L^{\infty}(\Omega\times(0, T)),  
\\[1mm] 
\label{strong2}
&\widehat{\varphi}_{h} \to \varphi     
\quad \mbox{strongly in}\ C([0, T]; H),  
\\[1mm]
\notag 
&\widehat{v}_{h} \to \varphi_{t}  
\quad \mbox{weakly$^{*}$ in}\ W^{1, \infty}(0, T; H) \cap 
                        W^{1, 2}(0, T; L^{\infty}(\Omega)) \cap L^{\infty}(\Omega\times(0, T)),   
\\[1mm]
\label{weak5}
&\overline{v}_{h} \to \varphi_{t}  
\quad \mbox{weakly$^{*}$ in}\ L^{\infty}(\Omega\times(0, T)),  
\\[1mm]
\label{strong3}
&\widehat{v}_{h} \to \varphi_{t}     
\quad \mbox{strongly in}\ C([0, T]; H),  
\\[1mm]
\notag 
&\overline{v}_{h} \to \varphi_{t}     
\quad \mbox{strongly in}\ L^{2}(0, T; H),  
\\[1mm]
\label{weak6}
&\overline{z}_{h} \to \varphi_{tt}     
\quad \mbox{weakly$^{*}$ in}\ L^{\infty}(0, T; H) \cap L^{2}(0, T; L^{\infty}(\Omega))  
\end{align}
as $h = h_{j} \searrow 0$. 
Here we recall \eqref{rem5} and Lemma \ref{timedisces3} to 
derive from \eqref{strong2} that  
\begin{align*}
\|\overline{\varphi}_h - \varphi\|_{L^{\infty}(0, T; H)} 
&\leq \|\overline{\varphi}_h - \widehat{\varphi}_h\|_{L^{\infty}(0, T; H)} 
       + \|\widehat{\varphi}_h - \varphi\|_{L^{\infty}(0, T; H)} 
\\
&\leq |\Omega|^{1/2}
         \|\overline{\varphi}_h - \widehat{\varphi}_h\|_{L^{\infty}(0, T; L^{\infty}(\Omega))} 
        + \|\widehat{\varphi}_h - \varphi\|_{L^{\infty}(0, T; H)} 
\\    
&= \frac{|\Omega|^{1/2}}{\sqrt{3}}
                                     h\|\overline{v}_{h}\|_{L^{\infty}(0, T; L^{\infty}(\Omega))} 
     + \|\widehat{\varphi}_h - \varphi\|_{L^{\infty}(0, T; H)} 
\to 0  
\end{align*}
as $h = h_{j} \searrow 0$, and hence it holds that 
\begin{align}\label{strongoverlinevarphi}
\overline{\varphi}_h \to \varphi \quad\mbox{strongly in}\ L^{\infty}(0, T; H)
\end{align}
as $h = h_{j} \searrow 0$. 
Thus we infer from \eqref{rem8}, \eqref{strongoverlinevarphi} 
and Lemma \ref{timedisces7} that 
\begin{align}\label{strongunderlinevarphi}
\underline{\varphi}_h \to \varphi \quad\mbox{strongly in}\ L^{\infty}(0, T; H)
\end{align}
as $h = h_{j} \searrow 0$. 
Therefore, owing to \eqref{weak1}-\eqref{strongunderlinevarphi}, 
(A1), Lemma \ref{timedisces3}, the local Lipschitz continuity of $\beta$, 
and (A3), we can establish existence of solutions to \eqref{P}. 
Moreover, we can confirm uniqueness of solutions to \eqref{P} 
in a similar way to the proof of Lemma \ref{Cauchy's criterion}.  
\qed
\end{prth1.2}
\begin{prth1.3}
Since the inclusion  $f \in L^{2}(0, T; H) \cap W^{1,1}(0, T; H)$ 
implies that there exists a constant $C_{1}>0$ such that 
\[
\|\overline{f}_{h} - f\|_{L^{2}(0, T; H)} \leq C_{1} h^{1/2}
\]
for all $h >0$ (see \cite[Section 5]{CK1}), 
we can prove Theorem \ref{maintheorem3} by 
Lemma \ref{Cauchy's criterion}. 
\qed
\end{prth1.3}

\section*{Acknowledgments}
The author is supported by JSPS Research Fellowships 
for Young Scientists (No.\ 18J21006).   
%
%
%

\end{document}